\DeclareMathOperator\arctanh{arctanh}
\tikzstyle{nodino}=[circle,draw,fill,inner sep=0pt,minimum size=0.5mm]
\tikzstyle{infinito}=[circle,inner sep=0pt,minimum size=0mm]
\tikzstyle{nodo}=[circle,draw,fill,inner sep=0pt, minimum size=0.5*width("k")]
\tikzstyle{nodo_vuoto}=[circle,draw,inner sep=0pt, minimum size=0.5*width("k")]
\tikzset{every loop/.style={min distance=10mm,in=300,out=240,looseness=10}}
\tikzset{place/.style={circle,thick,draw=blue!75,fill=blue!20,minimum
		size=6mm}}
\tikzset{place2/.style={circle,thick,draw=red!75,fill=red!20,minimum
		size=6mm}}
\definecolor{DEblue}{rgb}{0.08,0.24,0.54}
\definecolor{DEgreen}{rgb}{0.07,0.76,0.75}
\definecolor{DEorange}{rgb}{0.92,0.50,0.04}		
\newcommand{\rr}{{\mathbb R}}
\newcommand{\dip}{\eta^{dip}}
\newcommand{\Htau}{H_\tau^1}
\newcommand{\om}{\omega}
\newcommand{\dx}{\,dx}
\newcommand{\dt}{\,dt}
\newcommand{\ds}{\,ds}
\newcommand{\intRneg}{\int_{-\infty}^{0}}
\newcommand{\intRpos}{\int_{0}^{+\infty}}
\newcommand{\intone}{\int_{-1}^{1}}
\theoremstyle{plain} 
\newtheorem{thm}{Theorem}[section] 
\newtheorem{cor}[thm]{Corollary} 
\newtheorem{lem}[thm]{Lemma} 
\newtheorem{prop}[thm]{Proposition} 
\theoremstyle{definition}
\theoremstyle{definition} 
\newtheorem{defn}{Definition}[section]
\theoremstyle{remark} 
\newtheorem{rem}{Remark}[section]
\title{Discontinuous Ground States for NLSE on $\rr$ with  a F\"{u}l\"{o}p-Tsutsui $\delta$ interaction}
\author{Riccardo Adami$^1$, Takaaki Nakamura$^{2}$, Alice Ruighi$^{1,3}$
\\ \ \\{\small  {$^1$}Dipartimento di Scienze
Matematiche ``G.L. Lagrange'', Politecnico di Torino } \\ {\small
Corso Duca degli Abruzzi, 24, 10129 Torino, Italy} \\ \ \\
{\small  {$^2$}Laboratory of Physics, Kochi University of Technology} \\ {\small
Tosa Yamada, Kochi 782-8502, Japan}\\ \ \\
{\small  {$^3$}Dipartimento di Matematica ``G. Peano'',  Universit\`a di Torino } \\ {\small
Via Carlo Alberto 10, 10123 Torino, Italy}
}
\date{}
\begin{document}
\maketitle

\abstract
	We analyse the existence and the stability of the ground states of the one-dimensional nonlinear Schr\"{o}dinger equation with a focusing power nonlinearity and a defect located at the origin. In this paper a ground state is defined as a global minimizer of the action functional on the Nehari manifold and the defect considered is a F\"{u}l\"{o}p-Tsutsui $\delta$ type, namely a $\delta$ condition that allows discontinuities. The existence of ground states is proved by variational techniques, while the stability results from the Grillakis-Shatah-Strauss theory. 

       \section{Introduction}
       \label{sec:intro}
       
In the last decades the study of dynamics on metric graphs has developed enormously. One of the main reasons of this interest can be found in the adaptability of the models in approximating the evolution of systems located on ramified structures, where the transverse dimensions are negligible if compared to the longitudinal ones.\\
Schr\"odinger dynamics on ramified structures, or networks, was first investigated by Ruedenberg and Scherr \cite{rued} in 1953, studying the energy spectrum of valence electrons on the array of the naphthalene molecules. In particular, exploiting the geometry of the molecular structure of naphthalene, they defined a suitable Schr\"{o}dinger operator on the edges of a hexagonal grid in order to represent the quantum energy of the system and then computed its spectrum. 
This seminal paper has not only been considered as a milestone in physical chemistry, but it also introduced some important mathematical tools, such as the Kirchhoff's conditions at the vertices of a ramified structure. In some sense, this type of conditions describes a situation of homogeneity in the medium in which the dynamics occurs and they have been deeply investigated, at first for linear dynamics (see for example the milestone paper by Kostrykin and Schrader \cite{kost} or the introductory book by Berkolaiko and Kuchment \cite{berk}) and then for the nonlinear ones. In particular, the forerunner of this last line of research is the treatise \cite{meh}, but only in the last few years this topic has been deeply investigated \cite{ast1,ast2,ast3}, focusing on particular types of metric graphs such as compact graphs \cite{bmp,simon,good,cds,marpel}, periodic graphs \cite{adst,adr,dov} and the infinite metric trees \cite{dst}. Finally, the problem of a nonlinearity concentrated on a subgraph has been variously explored, for instance in \cite{t,serraten,serraten2,dt}.\\
Next to these conditions, there exists the family of non-Kirchhoff's conditions that, on the other hand, represents an inhomogeneity or defect in the medium. Examples of these conditions are the $\delta$ conditions, explored both on the real line and on star graphs \cite{fj,foo,acfn2012,acfn2013,acfn2016,anv}, the $\delta'$ conditions \cite{anv,an} and the dipole conditions \cite{anv}. More recently, a new type of non-Kirchhoff's conditions have been studied, the so-called nonlinear delta, for which two nonlinearities coexist: the standard one and a pointwise one \cite{abd,bd}. For a more complete investigation on the non-Kirchhoff's conditions we refer to \cite{abr}. \\
The purpose of this paper is to present some results on the study of the Nonlinear Schr\"{o}dinger equation when a specific generalization of $\delta$ conditions are imposed at the origin of the real line. Following \cite{cheon} we shall call them \emph{F\"{u}l\"{o}p-Tsutsui $\delta$} conditions and roughly speaking they can be seen as $\delta$-type conditions that generate discontinuities where the defect is located.
\newline 
\newline To be more specific, we investigate the existence and the stability of ground states on the real line $\rr$ for the Nonlinear Schr\"{o}dinger equation 
\begin{equation}
\label{eq:schroed}
i \partial_t u = H_{\tau,v} u - |u|^{ 2\mu}u,
\end{equation}
\noindent where $H_{\tau,v}$ is the self-adjoint extension of the one-dimensional laplacian, defined on the domain
\begin{equation}
\label{eq:domain}
D(H_{\tau,v}) := \{ u \in H^2(\mathbb{R} \backslash \{0\}) \, : \, u(0+)=\tau u(0-), u'(0-)-\tau u'(0+)=v u(0-)\}
\end{equation}
\noindent and its action reads $(H_{\tau,v} u)(x)= -u''(x)$ out of the origin. \\ In \eqref{eq:domain}, $\tau \in \rr \backslash \{0,\pm 1\}$ and  $v>0$, namely, we consider the case of an attractive $\delta$ interaction only.\\
In \cite{adno}, it has been established that the energy space associated to equation \eqref{eq:schroed} is 
\begin{equation*}
\Htau : =\{u \in H^1(\mathbb{R}_-) \oplus H^1(\mathbb{R}_+) \, : \, u(0+)=\tau u(0-)\},
\end{equation*}
and the energy functional 
\begin{equation*}
\label{eq:energy}
E(u)=\frac{1}{2} \left( ||u'||^2_{L^2(\mathbb{R}_-)}+||u'||^2_{L^2(\mathbb{R}_+)} \right) -\frac{1}{2\mu+2} ||u||_{L^{2\mu+2}(\mathbb{R})}^{2\mu+2} - \frac{v}{2}|u(0-)|^2
\end{equation*}
is conserved by the flow defined by \eqref{eq:schroed}.\\\\
In the following we use the slight abuse of notation: $$||u'||^2_{L^2(\mathbb{R})}=||u'||^2_{L^2(\mathbb{R}_-)}+||u'||^2_{L^2(\mathbb{R}_+)}$$ and, if it is not confusing, we shorten $||u'||^2_{L^2(\mathbb{R})}$ with $||u'||^2_2$ and $||u||^p_{L^p(\mathbb{R})}$ with $||u||^p_p$ for any exponent $p \geq 2$.\\\\
We define a \emph{ground state} as a global minimizer of the action functional
\begin{equation*}
\label{eq:action}
S_\om(u) = E(u) +\frac{\om}{2} ||u||^2_2,
\end{equation*}
\noindent among all functions in $\Htau$ satisfying the Nehari's constraint $I_\om(u) = 0$, where
\begin{equation*}
\label{eq:nehari}
I_\om(u) = ||u'||_2^2 - ||u||_{2\mu+2}^{2\mu+2} - v|u(0-)|^2+\om ||u||_2^2
\end{equation*}
\noindent is called Nehari's functional and $\mu >0$.\\
Notice that the notion of ground state we shall use does not refer to the mass constraint, so that its orbital stability is not guaranteed a priori. On the other hand, the use of Nehari manifold in the study of ground states is classical \cite{shatah} and has been already introduced for the study of Schr\"odinger equation with point interactions in \cite{fj}, \cite{foo} and \cite{an}.\\ Following the line of these works, we find stationary states and compare them to establish which, among them, are the ground states. This makes our model richer than the one described in \cite{fj} and \cite{foo}, encompassing a pure $\delta$ interaction. For this feature, the present model can be considered as a bridge between $\delta$ and $\delta'$ models.\\\\
The paper is organized as follows: in Section 2 we collect some preliminary results relevant for the remaining part of the paper; the main theorem about the existence of the ground states will be presented in Section 3, whereas in Section 4 we study the stationary states of the constrained functional and identify the ground state among them; Section 5 is finally devoted to the study of the orbital stability of the ground states.


	\section{Basic facts}
	\label{sec:notation}
	
In this section we will collect some basic remarks and preliminary results that will be relevant in the following, but for simplicity and clarity we prefer to present them here. \\As outlined in the Introduction, since one of the subjects of our study will be the existence of non-vanishing global minimizers for the action functional under the Nehari's constraint, let us recall that the stationary states of the functional $S_\om$ belong to the Nehari manifold, namely the zero-level set of the Nehari's functional and that is the reason why people refer to the Nehari's constraint as the "natural constraint" for the action functional.\\
To our aim, let us define a further functional, called reduced action, that does not depend on $\om$
\begin{equation*}
\widetilde{S}(u):= \frac{\mu}{2(\mu+1)}||u||^{2\mu+2}_{2\mu+2}
\end{equation*}
and note that $S_\om(u)=\widetilde{S}(u)$ holds for every $u$ on the Nehari manifold.\\
The importance of this functional is clarified by the following Lemma \ref{lemma2}. 
\begin{rem} Let us note that the energy of the linear bound states in $\Htau$ is $\om = \frac{v^2}{(\tau ^2 + 1)^2}$. Indeed, if we consider the eigenvalue problem
\begin{equation}
\label{linear}
\Bigg \{ \begin{array}{lr}
-u''+\omega u =0, \quad x\neq0, \quad u\in H^2(\mathbb{R}\backslash \{0\})\\
u(0+) =\tau u(0-), \\
u'(0-)-\tau u'(0+) =v u(0-),  \\
\end{array}
\end{equation}
we know that $u(x)=\chi_- e^{\sqrt{\om}x}+ \chi_+ e^{-\sqrt{\om}x}$, where $\chi_\pm$ are the characteristic functions of $\rr_\pm$, solves the first equation in \eqref{linear}. Imposing the boundary conditions at the origin on such $u$, it follows $$e^{-\sqrt{\om}(0+)}= \tau e^{\sqrt{\om}(0-)}$$ and $$\sqrt{\om}e^{\sqrt{\om}(0-)}+\tau \sqrt{\om} e^{-\sqrt{\om}(0+)}=v e^{\sqrt{\om}(0-)}.$$\\
Hence, $\om = \frac{v^2}{(\tau ^2 + 1)^2}$.
\end{rem}
\begin{lem}
\label{lemma2}
Let $\om > \frac{v^2}{(\tau ^2 + 1)^2}$. Then 
\begin{align} 
\label{inf1}
d(\om) : & = \inf \{S_\om(u) \, : \, u \in \Htau \backslash \{ 0 \}, I_\om(u)=0 \} \\
\label{inf2}
         & = \inf \{\widetilde{S}(u) \, : \, u \in \Htau \backslash \{ 0 \}, I_\om(u) \leq 0 \}.
\end{align}
In particular, if $u$ is a minimizer for one problem, it is a minimizer also for the other.
\begin{proof}
We can split the proof in two steps. In the first one we will show the equivalence between (\ref{inf1}) and (\ref{inf2}), whereas in the second one the equivalence between the two minimizers will be proved.\\\\
\emph{Step 1:} let $u \in \Htau \backslash \{ 0 \} $ such that $I_\om(u)=0$. Then $S_\om(u)=\widetilde{S}(u)$ and 
\begin{equation*}
\inf \{ S_\om(u)\, : \, I_\om(u) =0 \} \quad \geq \quad \inf \{ \widetilde{S}(u) \, : \,  I_\om(u) \leq 0 \}.
\end{equation*}
On the other hand, if we choose $u \in \Htau \backslash \{ 0 \} $ such that $I_\om(u)<0$, 
we can define 
\begin{equation}
\label{alfa}
\alpha(u):=\left( \frac{||u'||^2_2-v|u(0-)|^2+\om ||u||^2_2}{||u||^{2\mu+2}_{2\mu+2}} \right)^ {\frac{1}{2\mu}}.
\end{equation}
\noindent Because of the hypothesis  $I_\om(u)<0$, it follows that $\alpha(u)<1$. Moreover $I_\om(\alpha(u)u)=0$, hence $S_\om(\alpha(u)u)=\widetilde{S}(\alpha(u)u)=\alpha(u)^{2\mu+2}\widetilde{S}(u)<\widetilde{S}(u)$ and
\begin{equation*}
\inf \{ S_\om(u)\, : \, I_\om(u) =0 \} \quad \leq \quad \inf \{ \widetilde{S}(u) \, : \,  I_\om(u) \leq 0 \}.
\end{equation*} 
Hence, \eqref{inf1} and \eqref{inf2} are equivalent.\\
\emph{Step 2:} if $u$ is a minimizer for the functional $S_\om$ and $I_\om(u)=0$, then it means that there exists a function that reaches the infimum also for the problem with the functional $\widetilde{S}$. On the other hand, if $u$ were a minimizer for $\widetilde{S}$ with $I_\om(u)<0$, we could define $\alpha(u)$ as before and again it would result that $\widetilde{S}(\alpha(u)u)<\widetilde{S}(u)$. But this would contradict the fact that $u$ is a minimizer, hence $I_\om(u)=0$ and $u$ turns out to be a minimizer also for $S_\om$.
\end{proof}
\end{lem}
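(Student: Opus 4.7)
The plan is to establish both equalities by combining one algebraic identity with a single rescaling trick. The key observation is that $S_\om$ and $\widetilde{S}$ coincide on the Nehari manifold: substituting the relation $||u||_{2\mu+2}^{2\mu+2}=||u'||_2^2-v|u(0-)|^2+\om||u||_2^2$ (i.e. $I_\om(u)=0$) into the definition of $S_\om$ and collecting terms yields $S_\om(u)=\tfrac{\mu}{2(\mu+1)}||u||_{2\mu+2}^{2\mu+2}=\widetilde{S}(u)$. Combined with the set inclusion $\{I_\om=0\}\subset\{I_\om\le 0\}$, this immediately gives $d(\om)\ge\inf\{\widetilde{S}(u):u\in\Htau\setminus\{0\},\,I_\om(u)\le 0\}$.

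For the opposite inequality I would rescale. Given $u\in\Htau\setminus\{0\}$ with $I_\om(u)<0$, choose $\alpha(u)>0$ so that $I_\om(\alpha(u)u)=0$. Using the quadratic homogeneity of $||u'||_2^2$, $|u(0-)|^2$, $||u||_2^2$ and the $(2\mu+2)$-homogeneity of $||u||_{2\mu+2}^{2\mu+2}$, one is forced to take $\alpha(u)^{2\mu}=(||u'||_2^2-v|u(0-)|^2+\om||u||_2^2)/||u||_{2\mu+2}^{2\mu+2}$, and the hypothesis $I_\om(u)<0$ gives $\alpha(u)<1$. Hence $\widetilde{S}(\alpha(u)u)=\alpha(u)^{2\mu+2}\widetilde{S}(u)<\widetilde{S}(u)$, and since $\alpha(u)u$ sits on the Nehari manifold (where $S_\om=\widetilde{S}$), this provides a test element for $d(\om)$ with value strictly below $\widetilde{S}(u)$, yielding the reverse inequality.

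The equivalence of minimizers follows with essentially no extra work. Any minimizer $u$ of the first (Nehari-constrained) problem is feasible for the second and achieves $\widetilde{S}(u)=S_\om(u)=d(\om)$, hence it is optimal there. Conversely, a minimizer of the second problem cannot satisfy $I_\om(u)<0$, for the same rescaling would produce a feasible element with strictly smaller $\widetilde{S}$, contradicting optimality; once $I_\om(u)=0$ is known, $u$ is feasible for the first problem and attains its infimum.

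The only genuinely subtle point is the well-definedness of $\alpha(u)$, i.e. the strict positivity of the quadratic form $||u'||_2^2-v|u(0-)|^2+\om||u||_2^2$ for every nonzero $u\in\Htau$. This is exactly where the assumption $\om>v^2/(\tau^2+1)^2$ enters: by the linear eigenvalue computation of the preceding remark, this threshold corresponds to the coercivity of the bilinear form associated with $H_{\tau,v}+\om$, a fact that should be available from the spectral analysis of the self-adjoint extension and thus belongs to the basic facts of this section. Once this positivity is granted, the argument above goes through without further obstacle.
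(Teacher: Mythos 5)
Your proof is correct and follows essentially the same route as the paper: the identity $S_\om(u)=\widetilde{S}(u)$ on the Nehari manifold together with the set inclusion gives one inequality, the rescaling by $\alpha(u)$ with $\alpha(u)<1$ gives the other, and the same scaling argument transfers minimizers between the two problems. Your additional remark on the well-definedness of $\alpha(u)$ --- the strict positivity of $\|u'\|_2^2-v|u(0-)|^2+\om\|u\|_2^2$ under the hypothesis $\om>\frac{v^2}{(\tau^2+1)^2}$ --- is a point the paper leaves implicit at this stage and only establishes later (Lemma \ref{lemma1}), so it is a welcome clarification rather than a deviation.
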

\begin{rem}
\label{idea}
In the following we use that $I_\om(u)<0$ cannot hold if $u$ is a minimizer. 
\end{rem}
\noindent \\We now present a Sobolev type inequality adapted to the space $\Htau$, endowed with the norm 
\begin{equation}
\label{Htaunorm}
||u||_{\Htau}^2 :=||u||_{L^2(\rr)}^2+||u'||_{L^2(\rr_-)}^2+||u'||_{L^2(\rr_+)}^2
\end{equation}

\begin{prop}[Sobolev inequality]
\label{prop:Sobolev}
For any $u \in \Htau$, 
\begin{equation}
\label{Sobolev}
||u||_{2\mu+2} \leq C ||u||_{\Htau}
\end{equation}
where $C$ is a positive constant which depends only on $\mu$.
\begin{proof}
Let us consider $u \in \Htau$ such that $u=\chi_-u_-+\chi_+ u_+$ where $u_\pm$ are even functions in $H^1(\mathbb{R})$ and $\chi_\pm$ are the characteristic functions of $\rr_\pm$. 
\begin{align*}
||u||^2_{2\mu+2} & = \left( ||u||^{2\mu+2}_{2\mu+2} \right) ^{\frac{2}{2\mu+2}} =  \left( \frac{1}{2} \left( ||u_+||^{2\mu+2}_{2\mu+2} + ||u_-||^{2\mu+2}_{2\mu+2} \right) \right) ^{\frac{2}{2\mu+2}} \\
                            & \leq \frac{1}{2^{\frac{2}{2\mu+2}}}\left( ||u_+||^{2}_{2\mu+2} + ||u_-||^{2}_{2\mu+2}\right)\\
                            & \leq C \left( ||u_+||^{2}_{H^1} + ||u_-||^{2}_{H^1}\right)\\
                            & = C \left( ||u_+||^{2}_{2} + ||u'_+||^{2}_{2} + ||u_-||^{2}_{2} + ||u'_-||^{2}_{2}\right)\\
                            & = C \left(||u||^{2}_{2} + ||u'||^{2}_{2}\right) = C ||u||^2_{\Htau}.
\end{align*}
where the inequalities follow noting that $\frac{2}{2\mu+2}<1$ and by the Sobolev embedding on the line.

\end{proof}
\end{prop}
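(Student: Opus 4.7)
The inequality \eqref{Sobolev} is the analogue, for $\Htau$, of the classical one-dimensional Sobolev embedding $H^1(\rr)\hookrightarrow L^{2\mu+2}(\rr)$; the obstruction to applying the latter directly is that a generic $u\in\Htau$ jumps at the origin and therefore does not lie in $H^1(\rr)$. My plan is to sidestep this by reflecting $u$ across $0$ on each half-line.

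Given $u\in\Htau$, each restriction $u|_{\rr_\pm}$ belongs to $H^1(\rr_\pm)$, so its even extension $u_\pm$ across the origin lies in $H^1(\rr)$: the extended function is automatically continuous at $0$ and its derivative is $L^2$ on both sides, so no matching condition at the origin is required. By construction $u$ coincides with $u_+$ on $\rr_+$ and with $u_-$ on $\rr_-$, whence the parity of $u_\pm$ gives
\[
\|u\|_{2\mu+2}^{2\mu+2}=\tfrac{1}{2}\bigl(\|u_+\|_{2\mu+2}^{2\mu+2}+\|u_-\|_{2\mu+2}^{2\mu+2}\bigr),\qquad \|u_\pm\|_{H^1(\rr)}^2=2\,\|u\|_{H^1(\rr_\pm)}^2.
\]

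Next I would raise the first identity to the power $1/(\mu+1)$ and exploit the subadditivity of $t\mapsto t^{1/(\mu+1)}$, legitimate since $\mu+1>1$, to bound $\|u\|_{2\mu+2}^{2}$ by a constant times $\|u_+\|_{2\mu+2}^{2}+\|u_-\|_{2\mu+2}^{2}$. Applying the standard Sobolev inequality $\|u_\pm\|_{2\mu+2}^{2}\leq C\|u_\pm\|_{H^1(\rr)}^{2}$, valid on the whole real line with a constant depending only on $\mu$, and substituting $\|u_\pm\|_{H^1(\rr)}^2=2\|u\|_{H^1(\rr_\pm)}^2$, reassembles exactly $\|u\|_{\Htau}^2$ on the right-hand side (cf.\ \eqref{Htaunorm}), up to a redefinition of $C$.

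No step is genuinely delicate: once the reflection is performed, the argument is bookkeeping. The only conceptual remark is that the defect condition $u(0+)=\tau u(0-)$ never enters, because both $L^{2\mu+2}$ norms and one-sided $L^2$ norms of $u'$ are insensitive to a jump at the origin; correspondingly the constant $C$ produced by the scheme depends only on $\mu$ and not on $\tau$ or $v$.
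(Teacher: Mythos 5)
Your proposal is correct and follows essentially the same route as the paper: even reflection of $u|_{\rr_\pm}$ to functions $u_\pm\in H^1(\rr)$, subadditivity of $t\mapsto t^{1/(\mu+1)}$, and the classical Sobolev embedding $H^1(\rr)\hookrightarrow L^{2\mu+2}(\rr)$ on each reflected piece. Your bookkeeping of the factors of $2$ (e.g.\ $\|u_\pm\|_{H^1(\rr)}^2=2\|u\|_{H^1(\rr_\pm)}^2$) is in fact slightly more careful than the paper's, which silently absorbs them into $C$.
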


  \section{Existence}
  \label{sec:existence}	
In this section we present the main result concerning the existence of ground states, i.e. minimizers for the action functional under the Nehari's constraint. More precisely we prove the following theorem. 

\begin{thm}
\label{existence}
Let $\om > \frac{v^2}{(\tau ^2 + 1)^2}$. Then there exists $u \in \Htau \backslash \{ 0 \}$ that minimizes $S_\om$ among all functions belonging to the Nehari manifold $I_\om(u)=0$.
\end{thm}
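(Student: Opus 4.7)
The plan is to apply the direct method of the calculus of variations to the reduced problem \eqref{inf2}, which by Lemma \ref{lemma2} is equivalent to \eqref{inf1}. As a preparatory step, note that the assumption $\om>v^2/(\tau^2+1)^2$ makes the quadratic form $Q_\om(u):=||u'||_2^2+\om||u||_2^2-v|u(0-)|^2$ coercive on $\Htau$: indeed, by the Remark preceding Lemma \ref{lemma2}, $-v^2/(\tau^2+1)^2$ is the bottom of the spectrum of $H_{\tau,v}$, giving $Q_\om(u)\geq(\om-v^2/(\tau^2+1)^2)\,||u||_2^2$, and a standard weak-compactness argument exploiting the continuity of the trace $u\mapsto u(0-)$ upgrades this to an equivalence with the norm $||\cdot||_{\Htau}$ of \eqref{Htaunorm}.

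I would then pick a minimizing sequence $\{u_n\}\subset\Htau\setminus\{0\}$ with $I_\om(u_n)\leq 0$ and $\widetilde{S}(u_n)\to d(\om)$. Boundedness of $\widetilde{S}(u_n)$ forces $||u_n||_{2\mu+2}$ to be bounded; rewriting $I_\om(u_n)\leq 0$ as $Q_\om(u_n)\leq||u_n||_{2\mu+2}^{2\mu+2}$ and using the coercivity of $Q_\om$ then yields an $\Htau$-bound on $\{u_n\}$. Inserting the Sobolev estimate of Proposition \ref{prop:Sobolev} into the same inequality gives $||u_n||_{\Htau}\geq c_0>0$, so the sequence is also bounded away from zero. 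Along a subsequence one obtains a weak limit $u_n\rightharpoonup u$ in $\Htau$ with pointwise almost-everywhere convergence and, by the compactness of the embedding of $\Htau$ into $C$ on bounded neighbourhoods of the origin, convergence of the boundary traces $u_n(0\pm)\to u(0\pm)$.

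The critical step, and the main obstacle, is to exclude the possibility $u\equiv 0$: the non-compactness of the embedding $H^1(\rr)\hookrightarrow L^{2\mu+2}(\rr)$ permits mass to slip off to infinity, where the defect is invisible. To handle this I would invoke the Brezis--Lieb lemma along $u_n=u+(u_n-u)$ and, combining it with weak convergence of $u_n'$ and trace continuity at the origin, obtain the asymptotic decompositions
\[
I_\om(u_n)=I_\om(u)+I_\om^{\mathrm{free}}(u_n-u)+o(1),\qquad \widetilde{S}(u_n)=\widetilde{S}(u)+\widetilde{S}(u_n-u)+o(1),
\]
where $I_\om^{\mathrm{free}}$ denotes the Nehari functional of NLS on $\rr$ without defect. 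If $u\equiv 0$, the residual sequence would realize, up to rescaling on the free Nehari manifold, the free-line Nehari level $d_\infty(\om)$, forcing $d(\om)\geq d_\infty(\om)$. I would contradict this by exhibiting a competitor in $\Htau$ strictly below $d_\infty(\om)$: starting from the free soliton $\phi_\om$, define $w=\tau\phi_\om$ on $\rr_+$ and $w=\phi_\om$ on $\rr_-$ (resp.\ the analogous swap in the opposite regime of $\tau$), so that $w\in\Htau$ and the attractive boundary contribution $-v|w(0-)|^2$ makes $I_\om(w)<0$; the Nehari rescaling $\alpha(w)w$ of \eqref{alfa} then produces $\widetilde{S}(\alpha(w)w)<d_\infty(\om)$.

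Once $u\neq 0$ is secured, weak lower semicontinuity of $||\cdot||_{\Htau}$ together with the Brezis--Lieb splitting of $I_\om$ give $I_\om(u)\leq 0$ and $\widetilde{S}(u)\leq d(\om)$, so that $u$ attains the infimum in \eqref{inf2} and, by Lemma \ref{lemma2}, also in \eqref{inf1}; Remark \ref{idea} then upgrades $I_\om(u)\leq 0$ to $I_\om(u)=0$. The genuinely delicate point throughout is the strict inequality $d(\om)<d_\infty(\om)$, because the sign of $I_\om(w)$ for the test function above depends on the interplay between $\tau$, $v$ and the $L^p$-integrals of $\phi_\om$, and a case analysis in the regimes $|\tau|>1$ and $|\tau|<1$ is required to make this computation robust.
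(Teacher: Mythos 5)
Your overall skeleton — coercivity of the quadratic form for $\om>v^2/(\tau^2+1)^2$, boundedness of a minimizing sequence for $\widetilde S$ on $\{I_\om\le 0\}$, weak limit, Brezis--Lieb splitting, exclusion of vanishing by comparison with a ``problem at infinity'', and the final upgrade to $I_\om(u)=0$ via Remark \ref{idea} — is the same as the paper's. The genuine gap is in the identification of the problem at infinity. When $u\equiv 0$ and the traces $u_n(0\pm)$ vanish, the residual sequence still lives in $\Htau$ and still satisfies the jump condition $u_n(0+)=\tau u_n(0-)$; after the Nehari rescaling it sits on $\{I_\om^0=0\}\cap\Htau$, and the infimum of $\widetilde S$ there is the \emph{dipole} level $\widetilde S(\dip)$ of \cite{anv}, Section 8.4, not the free-line level $d_\infty(\om)=\widetilde S(\phi_\om)$. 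These two levels are genuinely different, and in the wrong direction for you: taking $w=\phi_\om$ on one half-line and $\tau\phi_\om$ on the other and rescaling onto $\{I^0_\om=0\}$ gives
\begin{equation*}
\widetilde S(\dip)\;\le\;\Bigl(\tfrac{1+\tau^2}{2}\Bigr)^{\frac{\mu+1}{\mu}}\Bigl(\tfrac{1+\tau^{2\mu+2}}{2}\Bigr)^{-\frac{1}{\mu}}\,\widetilde S(\phi_\om)\;<\;\widetilde S(\phi_\om),
\end{equation*}
by strict convexity of $x\mapsto x^{\mu+1}$. So the rescaling argument you invoke only yields $d(\om)\ge\widetilde S(\dip)$, which is strictly weaker than the bound $d(\om)\ge d_\infty(\om)$ you assert; to reach $d_\infty(\om)$ you would need a full profile/concentration--compactness decomposition showing that the escaping bumps are asymptotically continuous across the origin, and that step is not in your write-up. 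As a consequence, your competitor — which is designed to beat $d_\infty(\om)$ — does not produce a contradiction: beating $d_\infty(\om)$ is compatible with $d(\om)\ge\widetilde S(\dip)$.

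The repair is both necessary and simpler than the case analysis you flag as delicate: compare with the dipole level directly. The minimizer $\dip$ of the interaction-free problem on $\Htau$ is explicitly known from \cite{anv}, it satisfies $I^0_\om(\dip)=0$, hence $I_\om(\dip)=-v|\dip(0-)|^2<0$ because $v>0$ and $\dip(0-)\ne 0$; the rescaling \eqref{alfa} then gives $d(\om)\le\widetilde S(\alpha(\dip)\dip)<\widetilde S(\dip)$ with no dependence on the sign of $\tau-1$ (this is the paper's Lemma \ref{comp}). Combined with the lower bound $d(\om)\ge\widetilde S(\dip)$ that the vanishing scenario actually produces, this yields the contradiction. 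The remaining steps of your proposal (the $\Htau$-bound, trace convergence, $I_\om(u)\le 0$ via Brezis--Lieb, and weak lower semicontinuity of $\widetilde S$) are sound and coincide with the paper's argument.
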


\noindent The proof follows the line of \cite{an} and exploits Banach-Alaoglu's theorem and Brezis-Lieb's lemma to obtain convergence of minimizing sequences. However, before proving Theorem \ref{existence} we present some preliminary lemmas that show that the functional $S_\om$ is bounded from below and this motivates our search for the ground states.

\begin{lem}
\label{lemma1}
For any  $\om > \frac{v^2}{(\tau ^2 + 1)^2}$, it holds 

\begin{equation}
\label{cane}
||u'||^2_2-v|u(0-)|^2+\om||u||^2_2 \geq C ||u||^2_{\Htau},
\end{equation}
for some constant $C>0$.
\begin{proof}
First of all let us consider $u \in \Htau$ such that $u=\chi_- u_- + \chi_+ u_+$ where $u_\pm$ are even functions in $H^1(\mathbb{R})$ and $\chi_\pm$ are the characteristic functions of $\rr_\pm$. Note that, thanks to the standard Gagliardo-Nirenberg inequality in $H^1(\mathbb{R})$ and by symmetry, it follows that
\begin{align*}
|u(0 \pm)|^2 \leq ||u_\pm||^2_\infty & \leq ||u_\pm||_2 ||u'_\pm||_2  \\
                                                      & = \left( \int_{-\infty}^{+\infty} |u_\pm|^2 \right)^{\frac{1}{2}} \left( \int_{-\infty}^{+\infty} |u'_\pm|^2 \right)^{\frac{1}{2}}\\
                                                      & = \left( 2 \int_{-\infty}^{+\infty} |\chi_\pm u_\pm|^2 \right)^{\frac{1}{2}} \left(2 \int_{-\infty}^{+\infty} |\chi_\pm u'_\pm|^2 \right)^{\frac{1}{2}}\\
                                                      & = 2 ||\chi_\pm u_\pm||_2 ||\chi_\pm u'_\pm||_2.
\end{align*}
Let us observe that, in order to get (\ref{cane}), it is sufficient to estimate the negative term in the inequality. In particular, thanks to the fact that $u(0+)=\tau u(0-)$, for any $\alpha \geq 0$
\begin{equation}
\label{punt}
v|u(0-)|^2  = v \alpha |u(0-)|^2 +\frac{v(1-\alpha)}{\tau ^2}|u(0+)|^2.
\end{equation}
Hence, using the previous estimate on the r.h.s. of \eqref{punt}, we obtain
\begin{equation*}
\label{aa}
v|u(0-)|^2 \leq 2v \alpha  ||\chi_- u_-||_2 ||\chi_- u'_-||_2 + 2\frac{v (1-\alpha)}{\tau^2}  ||\chi_+ u_+||_2 ||\chi_+ u'_+||_2.
\end{equation*} 
Choosing $\alpha = \frac{1}{\tau^2+1}$ we get
\begin{equation*}
v|u(0-)|^2 \leq \frac{2v}{\tau^2+1} \left( ||\chi_- u_-||_2 ||\chi_- u'_-||_2 +  ||\chi_+ u_+||_2 ||\chi_+ u'_+||_2 \right)
\end{equation*}
and, for all $a>0$ 
\begin{align*}
v|u(0-)|^2 & \leq \frac{2v}{\tau^2+1} \left( \frac{a}{2}||\chi_- u_-||^2_2  + \frac{1}{2a}||\chi_- u'_-||^2_2 +  \frac{a}{2}||\chi_+ u_+||^2_2 + \frac{1}{2a}||\chi_+ u'_+||^2_2 \right)\\
                    & = \frac{v}{\tau^2+1}\left( a||u||^2_2 + \frac{1}{a} ||u'||^2_2\right).
\end{align*}
Finally, we obtain
\begin{align*}
||u'||^2_2-v|u(0-)|^2+\om||u||^2_2 & \geq \left( 1- \frac{v}{a(\tau^2 +1)} \right) ||u'||^2_2 + \left( \om - \frac{va}{\tau^2 +1} \right) ||u||^2_2 \\
                                                              &  \geq C ||u||^2_{\Htau},
\end{align*}
where the constant $C$ is positive since we can always choose a parameter $a$ such that 
\begin{equation}
\label{eq:ebounds}
\frac{v}{\tau^2+1}<a<\frac{\om (\tau^2+1)}{v},
\end{equation} 
thanks to the hypothesis $\om > \frac{v^2}{(\tau ^2 + 1)^2}$.

\end{proof}
\end{lem}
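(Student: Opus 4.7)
The plan is to control the only negative term $-v|u(0-)|^2$ in the left-hand side by a small multiple of $\|u'\|_2^2$ plus a small multiple of $\|u\|_2^2$, leaving positive leftover coefficients on both quadratic quantities provided $\omega$ is strictly larger than the critical threshold $v^2/(\tau^2+1)^2$. This reduces the lemma to a well-chosen application of Young's inequality and is essentially the standard way to show that the quadratic part of the energy is coercive on $\Htau$.

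First, I would write $u = \chi_- u_- + \chi_+ u_+$ and extend each piece by even reflection to an $H^1(\mathbb{R})$ function, so that the one-dimensional Gagliardo-Nirenberg inequality $\|u_\pm\|_\infty^2 \le \|u_\pm\|_2 \|u_\pm'\|_2$ can be applied on the full line. This gives $|u(0\pm)|^2 \le 2\,\|\chi_\pm u_\pm\|_2\, \|\chi_\pm u_\pm'\|_2$, a bound in terms of the half-line norms of $u$ and $u'$. This is the same reflection trick that is used in Proposition~\ref{prop:Sobolev} and is natural in the F\"ul\"op-Tsutsui setting because both one-sided traces are controlled.

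Next, I would exploit the transmission condition $u(0+) = \tau u(0-)$ to rewrite, for any $\alpha \in [0,1]$,
\[
v|u(0-)|^2 = v\alpha |u(0-)|^2 + \frac{v(1-\alpha)}{\tau^2}|u(0+)|^2,
\]
so that after applying the previous estimate to each side and choosing $\alpha = 1/(\tau^2+1)$ to symmetrize the two coefficients, one obtains
\[
v|u(0-)|^2 \le \frac{2v}{\tau^2+1}\bigl(\|\chi_-u_-\|_2\|\chi_-u_-'\|_2 + \|\chi_+u_+\|_2\|\chi_+u_+'\|_2\bigr).
\]
Applying Young's inequality $ab \le \frac{a^2}{2}\epsilon + \frac{b^2}{2\epsilon}$ with a free parameter, and recombining the two half-line pieces, gives a bound of the form $\frac{v}{\tau^2+1}\bigl(a\|u\|_2^2 + a^{-1}\|u'\|_2^2\bigr)$.

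The last step is simply to subtract this bound and observe that
\[
\|u'\|_2^2 - v|u(0-)|^2 + \omega\|u\|_2^2 \ge \Bigl(1 - \tfrac{v}{a(\tau^2+1)}\Bigr)\|u'\|_2^2 + \Bigl(\omega - \tfrac{va}{\tau^2+1}\Bigr)\|u\|_2^2.
\]
Both coefficients are strictly positive as soon as $\frac{v}{\tau^2+1} < a < \frac{\omega(\tau^2+1)}{v}$, and this interval is nonempty precisely under the hypothesis $\omega > v^2/(\tau^2+1)^2$. Taking $C$ to be the minimum of the two positive coefficients yields \eqref{cane}. The only mildly delicate point is the choice of $\alpha$: any other choice would force different, non-matching constants for the two half-lines and would lead to a worse threshold on $\omega$, so identifying $\alpha = 1/(\tau^2+1)$ as the one that makes the available Young-parameter window agree with the linear-eigenvalue threshold is really the heart of the argument.
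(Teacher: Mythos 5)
Your proposal is correct and follows essentially the same route as the paper's proof: the even-reflection trick for the Gagliardo--Nirenberg trace bound, the $\alpha$-splitting of $v|u(0-)|^2$ via the transmission condition with $\alpha = 1/(\tau^2+1)$, Young's inequality with the free parameter $a$, and the same admissible window $\frac{v}{\tau^2+1}<a<\frac{\om(\tau^2+1)}{v}$ guaranteed by $\om > \frac{v^2}{(\tau^2+1)^2}$. No gaps.
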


\noindent In particular, it follows:

\begin{lem}
\label{below}
For any $\om > \frac{v^2}{(\tau ^2 + 1)^2}$, it holds $d(\om)>0$.
\begin{proof}
This result is a consequence of Lemma \ref{lemma1}, since
\begin{equation*}
I_\om(u) \geq C ||u||^2_{\Htau} - ||u||^{2\mu+2}_{2\mu+2} \geq C ||u||^{2}_{2\mu+2} - ||u||^{2\mu+2}_{2\mu+2},
\end{equation*}
for every $u \in \Htau$ from Sobolev inequality (\ref{Sobolev}), and C is a positive constant. Thanks to Lemma \ref{lemma2}, $u$ can be chosen in the region $I_\om(u) \leq 0$, hence it results that either $u=0$ or $||u||_{2\mu+2} \geq C^{\frac{1}{2\mu}}>0$. But since we are looking for non-zero minimizers, it follows that $||u||_{2\mu+2}$>0 and therefore $d(\om)>0$.
\end{proof}
\end{lem}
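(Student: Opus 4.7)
The plan is to combine the coercivity estimate from Lemma \ref{lemma1}, the Sobolev embedding from Proposition \ref{prop:Sobolev}, and the equivalent formulation of $d(\om)$ provided by Lemma \ref{lemma2} to get a uniform positive lower bound on $\|u\|_{2\mu+2}$ along any admissible minimizing sequence, which then forces $\widetilde{S}(u)$, and hence $d(\om)$, to be bounded below by a strictly positive constant.

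First I would switch to the reduced variational problem \eqref{inf2} and take an arbitrary $u \in \Htau \setminus \{0\}$ with $I_\om(u) \leq 0$. Rewriting the Nehari inequality gives
\begin{equation*}
\|u'\|_2^2 - v|u(0-)|^2 + \om \|u\|_2^2 \;\leq\; \|u\|_{2\mu+2}^{2\mu+2}.
\end{equation*}
By Lemma \ref{lemma1}, the left-hand side is bounded below by $C_1 \|u\|_{\Htau}^2$ for some $C_1 > 0$, and by the Sobolev-type inequality \eqref{Sobolev} we have $\|u\|_{\Htau}^2 \geq C_2 \|u\|_{2\mu+2}^2$. Chaining these estimates yields
\begin{equation*}
C \|u\|_{2\mu+2}^2 \;\leq\; \|u\|_{2\mu+2}^{2\mu+2},
\end{equation*}
with $C := C_1 C_2 > 0$.

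Since $u \neq 0$, the Sobolev inequality together with the coercivity also prevents $\|u\|_{2\mu+2} = 0$ (otherwise $u \equiv 0$), so we can divide, obtaining $\|u\|_{2\mu+2}^{2\mu} \geq C$, i.e.\ $\|u\|_{2\mu+2} \geq C^{1/(2\mu)}$. Plugging this uniform lower bound into the reduced action gives
\begin{equation*}
\widetilde{S}(u) \;=\; \frac{\mu}{2(\mu+1)} \|u\|_{2\mu+2}^{2\mu+2} \;\geq\; \frac{\mu}{2(\mu+1)} C^{(\mu+1)/\mu} \;>\; 0,
\end{equation*}
and taking the infimum over all admissible $u$ yields $d(\om) > 0$ via Lemma \ref{lemma2}.

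No step looks genuinely hard: the only subtlety is being careful that the argument is run on the enlarged constraint set $\{I_\om(u) \leq 0\}$ (where $\widetilde{S}$ is minimized), and not directly on the Nehari manifold, so that the chain Lemma \ref{lemma1} $\Rightarrow$ Sobolev $\Rightarrow$ lower bound on $\|u\|_{2\mu+2}$ is logically valid. Once Lemma \ref{lemma2} is invoked to identify $d(\om)$ with the reduced infimum, the positivity follows cleanly from the two preliminary estimates.
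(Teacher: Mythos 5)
Your proof is correct and follows essentially the same route as the paper: Lemma \ref{lemma1} combined with the Sobolev inequality \eqref{Sobolev} gives $C\|u\|_{2\mu+2}^{2}\le \|u\|_{2\mu+2}^{2\mu+2}$ on the enlarged set $\{I_\om\le 0\}$ from Lemma \ref{lemma2}, hence the uniform bound $\|u\|_{2\mu+2}\ge C^{1/(2\mu)}$. If anything, your write-up is slightly more complete, since you explicitly convert that uniform lower bound into a strictly positive lower bound for $\widetilde{S}$ over the whole admissible set, a step the paper's own proof leaves implicit.
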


\noindent Finally, let us consider the following action functional with no point interactions
\begin{equation}
\label{nointeract}
S_\om^{0}(u)  =  \frac{1}{2} ||u'||_2^2 -\frac{1}{2\mu+2} ||u||_{2\mu+2}^{2\mu +2}+\frac{\om}{2} ||u||_2^2
\end{equation}
and its corresponding Nehari's functional
\begin{equation}
\label{nointneh}
I_\om^{0}(u) =   ||u'||_2^2 - ||u||_{2\mu+2}^{2\mu +2}+\om ||u||_2^2,
\end{equation}
defined on the space $\Htau$. From Section 8.4 of \cite{anv}, we know that for any $\tau>0$ and $\om>0$ the minimizer of the functional $S_\om^{0}$ among the functions in $\Htau \backslash \{0\}$ such that $I_\om^{0}=0$ is given by the solution of the dipole interaction problem
\begin{equation}
\label{dipole}
\dip(x)= \left( \omega (\mu+1) \right)^{\frac{1}{2\mu}} \cosh^{-\frac{1}{\mu}} \left( \mu \sqrt{\omega}(x - \zeta_\pm) \right),
\end{equation}
where $\zeta_\pm$ are defined by 
\begin{equation*}
\tanh \left( \mu \sqrt{\om} \zeta_- \right) = \sqrt{\frac{1-\tau^{2\mu}}{1-\tau^{2\mu+4}}}
\end{equation*}
\noindent and
\begin{equation*}
\tanh \left( \mu \sqrt{\om} \zeta_+ \right) = \tau^2 \sqrt{\frac{1-\tau^{2\mu}}{1-\tau^{2\mu+4}}}.
\end{equation*}
Note that through the same argument used in Lemma \ref{lemma2}, the search for a non-zero minimizer for the functional $S_\om^{0}$ on the manifold $\{ u \in \Htau \, : \, I_\om^{0}(u)=0 \}$ turns out to be equivalent to look for a minimizer for the functional $\widetilde{S}$ on the manifold $\{ u \in \Htau \, : \, I_\om^{0}(u) \leq 0 \}$, in particular for any $u \in \Htau$ such that $I_\om^0(u) \leq 0$, it holds 
\begin{equation}
\label{aaa}
\widetilde{S}(\dip) \leq \widetilde{S}(u). 
\end{equation}
Let us introduce a lemma that will be used in the following and links the original problem to the one with no point interactions.

\begin{lem}
\label{comp}
Let $\om > \frac{v^2}{(\tau ^2 + 1)^2}$. Then, $d(\om)<\widetilde{S}(\dip)$.
\begin{proof}
The proof of this lemma follows immediately from Remark \ref{idea}, noting that 
\begin{equation*}
I_\om(\dip)=I_\om^0(\dip) - v|\dip(0-)|^2<0,
\end{equation*}
because of (\ref{dipole}), therefore $\dip$ is not a minimizer for $\widetilde{S}$ on $I_\om \leq 0$.
\end{proof}
\end{lem}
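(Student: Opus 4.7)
The plan is to combine the equivalent formulation of $d(\om)$ established in Lemma \ref{lemma2} with the strict-inequality version of the scaling trick that appears in its proof. Concretely, the goal is to produce an admissible competitor for $d(\om)$ whose value of $\widetilde{S}$ is strictly smaller than $\widetilde{S}(\dip)$.

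First, I would observe that $\dip$, being the minimizer of the unperturbed action $S_\om^{0}$ on its Nehari manifold, satisfies $I_\om^{0}(\dip)=0$. From the explicit formula \eqref{dipole} one has
\begin{equation*}
\dip(0-) \;=\; \bigl(\om(\mu+1)\bigr)^{1/(2\mu)}\cosh^{-1/\mu}\!\bigl(-\mu\sqrt{\om}\,\zeta_-\bigr) \;\neq\; 0,
\end{equation*}
so, since $v>0$, one computes directly
\begin{equation*}
I_\om(\dip) \;=\; I_\om^{0}(\dip) - v|\dip(0-)|^{2} \;=\; -v|\dip(0-)|^{2} \;<\; 0.
\end{equation*}

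Second, I would invoke Lemma \ref{lemma2} to rewrite $d(\om)$ as the infimum of $\widetilde{S}$ over $\{u \in \Htau\setminus\{0\} : I_\om(u)\leq 0\}$. Since $\dip$ lies in this constraint set but satisfies $I_\om(\dip)<0$ \emph{strictly}, I can apply the same scaling construction used in Step 1 of the proof of Lemma \ref{lemma2}: define $\alpha:=\alpha(\dip)$ through \eqref{alfa}, so that $\alpha<1$ and $I_\om(\alpha\dip)=0$. Then
\begin{equation*}
\widetilde{S}(\alpha\dip) \;=\; \alpha^{2\mu+2}\,\widetilde{S}(\dip) \;<\; \widetilde{S}(\dip),
\end{equation*}
and since $\alpha\dip$ is admissible for the infimum defining $d(\om)$, chaining the two estimates yields $d(\om)\leq \widetilde{S}(\alpha\dip)<\widetilde{S}(\dip)$, as claimed. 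Equivalently, one may quote Remark \ref{idea} to conclude that $\dip$ cannot itself be a minimizer of $\widetilde{S}$ on $\{I_\om\leq 0\}$, forcing the infimum to be strictly below $\widetilde{S}(\dip)$.

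Beyond these two steps there is essentially no difficulty: everything reduces to the elementary check that $\dip$ does not vanish at the origin, so that the attractive point interaction genuinely decreases the Nehari functional. In spirit, the lemma merely records the physically natural fact that switching on an attractive F\"ul\"op--Tsutsui $\delta$ defect strictly lowers the minimum of the reduced action compared to the defect-free problem.
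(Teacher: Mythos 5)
Your proposal is correct and follows essentially the same route as the paper: both arguments rest on the observation that $I_\om^0(\dip)=0$ while $\dip(0-)\neq 0$ forces $I_\om(\dip)=-v|\dip(0-)|^2<0$, so that $\dip$ cannot be a minimizer of $\widetilde{S}$ on $\{I_\om\le 0\}$ and the infimum $d(\om)$ must lie strictly below $\widetilde{S}(\dip)$. The only difference is that you spell out the scaling step $\widetilde{S}(\alpha\dip)=\alpha^{2\mu+2}\widetilde{S}(\dip)<\widetilde{S}(\dip)$ explicitly, whereas the paper delegates it to Remark \ref{idea}.
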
 

\noindent Now we are able to demonstrate Theorem \ref{existence}.
\begin{proof}
Let us consider a minimizing sequence $u_n$ for the functional $\widetilde{S}$ such that $I_\om(u_n) \leq 0$ and prove that it is bounded in the $\Htau$ norm.\\
By definition, $\widetilde{S}(u_n)\rightarrow d(\om)$ for $n \rightarrow \infty$, hence the sequence $||u_n||^{2\mu+2}_{2\mu+2}$ is bounded by a positive constant $C$. \\
Since $I_\om(u_n) \leq 0$, it follows that 
\begin{equation*}
||u'_n||_2^2-v|u_n(0-)|^2+\om||u_n||^2_2-||u_n||^{2\mu+2}_{2\mu+2} \leq 0
\end{equation*}
and thanks to the boundedness of the $L^{2\mu+2}$-norm we get
\begin{equation*}
||u'_n||_2^2-v|u_n(0-)|^2+\om||u_n||^2_2 \leq ||u_n||^{2\mu+2}_{2\mu+2} \leq C.
\end{equation*}
On the other hand, by the proof of Lemma \ref{lemma1} we know that there exists $a>0$ such that
\begin{equation*}
||u'_n||_2^2-v|u_n(0-)|^2+\om||u_n||^2_2 \geq \left(\om - \frac{va}{\tau^2+1} \right) ||u_n||^2_2
\geq 0.
\end{equation*}
Hence, owing to \eqref{eq:ebounds} we conclude that:
\begin{equation*}
\label{L2bound}
||u_n||^2_2 \leq \left( \om - \frac{va}{\tau^2+1} \right)^{-1}C
\end{equation*}
and the boundedness of the $L^2$-norm of the minimizing sequence is proved. To show the boundedness of the $L^2$-norm of the sequence of the derivatives, we can proceed in a similar way. In particular:
\begin{align*}
 ||u'_n||_2^2 & \leq v|u_n(0-)|^2-\om||u_n||^2_2+||u_n||^{2\mu+2}_{2\mu+2} \\
                       & \leq v|u_n(0-)|^2+||u_n||^{2\mu+2}_{2\mu+2}\\
                       & \leq \frac{v}{\tau^2+1} \left( a||u_n||^2_2 +\frac{1}{a}||u'_n||^2_2 \right) + C,
\end{align*}
where for the last inequality we used Lemma \ref{lemma1} and the boundedness of the $L^{2\mu+2}$-norm. Hence, by \eqref{eq:ebounds}
\begin{equation*}
\left(1-\frac{v}{a(\tau^2+1)} \right) ||u'_n||_2^2 \leq \frac{av}{\tau^2+1}||u_n||^2_2+C.
\end{equation*}
This proves that the $L^2$-norm of the sequence $u'_n$ is bounded and by (\ref{Htaunorm}) we conclude that the sequence $u_n$ is bounded in the $\Htau$-norm. 
\newline By Banach-Alaoglu's theorem there exists a subsequence (that we will still call $u_n$) that is weakly convergent in $\Htau$. We name $u$ its weak limit and prove that $u \neq 0$ and $I_\om(u)\leq 0$. Before showing that $u$ is non-vanishing, it is useful to prove that
\begin{equation}
\label{neharilimit}
\lim_{n \rightarrow \infty} I_\om(u_n)=0.
\end{equation}
This is proved by contradiction. Indeed, if we suppose that $\liminf I_\om(u_n) < 0$, then there would exist a subsequence denoted by $u_n$ again and we could define a sequence $v_n:=\beta_n u_n$, with 
\begin{equation*}
\beta_n := \left( \frac{||u'_n||^2_2-v|u_n(0-)|^2+\om ||u_n||^2_2}{||u_n||^{2\mu+2}_{2\mu+2}} \right)^ {\frac{1}{2\mu}}
\end{equation*}
and $\liminf \beta_n <1$. Hence, we would get that 
\begin{equation*}
\liminf \widetilde{S}(v_n) = \liminf {\beta_n}^{2\mu+2}\widetilde{S}(u_n) < \liminf \widetilde{S}(u_n),
\end{equation*} contradicting the hypothesis that $u_n$ is a minimizing sequence. Therefore $\liminf I_\om(u_n) \geq 0$, but since $\limsup I_\om(u_n) \leq 0$, it must be $\lim I_\om(u_n)=0$.\\ Finally, to prove that $u \neq 0$, we proceed again by contradiction, assuming that $u = 0$ and in particular $u(0+)=u(0-)=0$. We can define a sequence $h_n:=\rho_n u_n$ where
\begin{equation}
\rho_n := \left( \frac{||u'_n||^2_2+\om ||u_n||^2_2}{||u_n||^{2\mu+2}_{2\mu+2}} \right)^ {\frac{1}{2\mu}}.
\end{equation}
Because of the estimate $|u_n(0 \pm)-u(0 \pm)|\leq ||u_n-u||_{\Htau}$, it follows that $\lim u_n(0 \pm)=u(0 \pm)=0$ and thanks to (\ref{neharilimit}), we obtain
\begin{equation}
\lim \rho_n= \lim_{n \rightarrow \infty} \left(1+ \frac{I_\om(u_n)+v|u_n(0-)|^2}{||u_n||^{2\mu+2}_{2\mu+2}} \right)^{\frac{1}{2\mu}}=1.
\end{equation}
Therefore, it follows that $\lim \widetilde{S}(h_n)=\lim {\rho_n}^{2\mu+2}\widetilde{S}(u_n)=d(\om)$. \\On the other hand we observe that
\begin{equation*}
I^0_\om(h_n)= I^0_\om(\rho_n u_n)= \rho_n^2(||u'_n||^2_2+\om||u_n||^2_2-\rho_n^{2\mu}||u_n||^{2\mu+2}_{2\mu+2})=0.
\end{equation*}
By (\ref{aaa}) we can conclude that $d(\om) \geq \widetilde{S}(\dip)$, but by Lemma \ref{comp} we know that $d(\om)<\widetilde{S}(\dip)$. Thus, the assumption $u=0$ cannot hold.  \\\\
It remains to prove that $u$ belongs to the right manifold and in particular that $I_\om(u) \leq 0$. For this purpose we exploit Brezis-Lieb's lemma \cite{bl}, that establishes that: if $u_n \rightarrow u$ pointwise and $||u_n||_p$ is uniformly bounded, then 
\begin{equation}
\label{brezis-lieb}
||u_n||^p_p-||u_n-u||^p_p-||u||^p_p \rightarrow 0, \quad \forall 1<p<\infty.
\end{equation}
\noindent Then, owing to that result
\begin{equation}
\label{Sineq}
\widetilde{S}(u_n)-\widetilde{S}(u_n-u)-\widetilde{S}(u) \rightarrow 0,
\end{equation}
whereas by the weak convergence of $u_n$ in $\Htau$, it follows that
\begin{equation}
\label{nehariineq}
I_\om(u_n)-I_\om(u_n-u)-I_\om(u) \rightarrow 0.
\end{equation}
To show that $I_\om(u)\leq 0$ we proceed by contradiction assuming that $I_\om(u)>0$. Hence, from (\ref{nehariineq}) it follows that
\begin{equation*}
\lim I_\om(u_n-u) = \lim I_\om(u_n) - I_\om(u)=- I_\om(u)<0
\end{equation*}
thanks to (\ref{neharilimit}). This means that there exists a $\bar{n}$ such that for any $n > \bar{n}$, $I_\om(u_n-u)<0$ holds and therefore
\begin{equation}
\label{uno}
d(\om) < \widetilde{S}(u_n - u), \quad \forall n > \bar{n},
\end{equation} 
thanks to Remark \ref{idea}.\\
On the other hand, by (\ref{Sineq}) we get
\begin{equation}
\label{due}
\lim_{n \rightarrow \infty} \widetilde{S}(u_n-u) = \lim_{n \rightarrow \infty} \widetilde{S}(u_n) - \widetilde{S}(u)=d(\om)- \widetilde{S}(u)<d(\om)
\end{equation}
due to the fact that $\widetilde{S}(u)>0$, since $u\neq 0$.\\ Finally we note that (\ref{uno}) and (\ref{due}) are in contradiction, hence the hypothesis $I_\om(u)>0$ cannot hold.\\
By definition, one has that $d(\om) \leq \widetilde{S}(u)$, but on the other hand it holds
\begin{equation*}
\widetilde{S}(u)=\frac{\mu}{2(\mu+1)}||u||^{2\mu+2}_{2\mu+2} \leq \lim_{n \rightarrow \infty} \frac{\mu}{2(\mu+1)}||u_n||^{2\mu+2}_{2\mu+2}=d(\om),
\end{equation*}
because $u_n \rightarrow u$ weakly in $L^{2\mu+2}$. Hence, $u$ is the suitable minimizer and
\begin{equation}
\label{final}
\widetilde{S}(u)=d(\om).
\end{equation}
\end{proof}
\noindent We end this section presenting a stronger result about the convergence of a minimizing sequence in $\Htau$. In particular:
\begin{cor}
Every minimizing sequence converges strongly in $\Htau$.
\begin{proof}
Let us denote by $u_n$ a minimizing sequence. From (\ref{due}) and (\ref{final}) it follows that $u_n \rightarrow u$ strongly in $L^{2\mu+2}$. 
Moreover, thanks to (\ref{neharilimit}) and Remark \ref{idea}, one has
\begin{align*}
||u'_n||^2_2 + \om ||u_n||^2_2 & = I_{\om}(u_n) + ||u_n||^{2\mu+2}_{2\mu+2}+v|u_n(0-)|^2 \\
                                                 & \rightarrow ||u||^{2\mu+2}_{2\mu+2}+v|u(0-)|^2 \\
                                                 & =||u'||^2_2+\om ||u||^2_2.
\end{align*}
Thanks to (\ref{cane}), this implies strong convengence in $\Htau$ and complete the proof.
\end{proof}
\end{cor}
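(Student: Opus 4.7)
My plan is to upgrade the weak convergence $u_n \rightharpoonup u$ in $\Htau$ obtained in the proof of Theorem \ref{existence} to norm convergence, by first extracting strong $L^{2\mu+2}$ convergence and then using the Nehari identity to recover convergence of the quadratic part of the norm, before exploiting the Hilbert structure of $\Htau$ at the end. The first step is immediate: relation (\ref{due}) combined with $\widetilde{S}(u)=d(\om)$ from (\ref{final}) forces $\widetilde{S}(u_n-u)\to 0$, and since $\widetilde{S}$ is a positive multiple of $\|\cdot\|_{2\mu+2}^{2\mu+2}$, this already gives $u_n\to u$ strongly in $L^{2\mu+2}(\rr)$.

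Next I would recover the quadratic part through the identity
\[
\|u'_n\|_2^2+\om\|u_n\|_2^2 = I_\om(u_n) + \|u_n\|_{2\mu+2}^{2\mu+2} + v|u_n(0-)|^2.
\]
By (\ref{neharilimit}) the first summand tends to $0$; the second converges to $\|u\|_{2\mu+2}^{2\mu+2}$ by the previous step; and the third converges to $v|u(0-)|^2$ by Rellich compactness, since weak $\Htau$-convergence restricts to weak $H^1$-convergence on each half-line, and the compact embedding of $H^1$ into $C$ on a bounded subinterval abutting the origin yields $u_n(0\pm)\to u(0\pm)$. Invoking $I_\om(u)=0$, guaranteed by Remark \ref{idea} because $u$ is a minimizer, one concludes that $\|u'_n\|_2^2+\om\|u_n\|_2^2 \to \|u'\|_2^2+\om\|u\|_2^2$.

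Since $\om>0$, this quadratic form is an equivalent Hilbert norm on $\Htau$. Expanding $\|u'_n-u'\|_2^2+\om\|u_n-u\|_2^2$ and using the cross-term limits $\langle u_n,u\rangle_2\to\|u\|_2^2$ and $\langle u'_n,u'\rangle_2\to\|u'\|_2^2$ — direct consequences of weak convergence in $\Htau$ — shows that this quantity vanishes, which is the desired strong convergence; alternatively, Lemma \ref{lemma1} applied to $u_n-u$ delivers the same conclusion once the trace correction $v|(u_n-u)(0-)|^2$ is seen to vanish. The only really delicate point in this scheme is the boundary-value convergence $u_n(0\pm)\to u(0\pm)$, the single place where Rellich compactness on bounded half-intervals must be invoked explicitly; everything else is algebraic bookkeeping.
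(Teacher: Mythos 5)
Your proposal is correct and follows essentially the same route as the paper: strong $L^{2\mu+2}$ convergence from \eqref{due} and \eqref{final}, then the Nehari identity together with \eqref{neharilimit}, the trace convergence $u_n(0-)\to u(0-)$ and $I_\om(u)=0$ to get convergence of the quadratic part, and finally the coercivity estimate \eqref{cane} (equivalently, weak convergence plus norm convergence in a Hilbert norm) to conclude. The only difference is presentational: you spell out the trace convergence and the final expansion in more detail than the paper does.
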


 \section{Ground States}
 \label{sec:ground}

In order to identify the ground state of $S_\om$, this section is devoted to study the stationary states of the constrained action functional and in particular to introduce the F\"{u}l\"{o}p-Tsutsui conditions presented in the Introduction. Then, we will detect the ground state among all the stationary states of the constrained functional.

 \subsection{Stationary States}

In the first part of this section we present some results about the stationary states of the functional $S_\om$; in particular we prove that they solve the stationary nonlinear Schr\"{o}dinger equation on each of the two halflines and own a discontinuity at the origin under some specific conditions, the so-called F\"{u}l\"{o}p-Tsutsui conditions.
\begin{prop}
A stationary state for the action functional $S_\om$ constrained on the Nehari manifold solves
\begin{equation}
\label{condition1}
\Bigg \{ \begin{array}{lr}
-u''-|u|^{2\mu}u+\omega u =0, \quad x\neq0, \quad u\in H^2(\mathbb{R}\backslash \{0\})\\
u(0+) =\tau u(0-) \\
u'(0-)-\tau u'(0+) =v u(0-)  \\
\end{array}
\end{equation}
\end{prop}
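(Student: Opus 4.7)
The plan is to run the standard Lagrange multiplier argument for constrained critical points and then read off the boundary conditions through integration by parts, exploiting the fact that one of them is already encoded in the definition of $\Htau$.

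First, since $u$ is a critical point of $S_\om$ restricted to the Nehari manifold $\mathcal{N} = \{v \in \Htau\setminus\{0\} : I_\om(v)=0\}$, there exists a Lagrange multiplier $\lambda \in \mathbb{R}$ with $S_\om'(u) = \lambda\, I_\om'(u)$ in the dual of $\Htau$. Testing this identity with $u$ itself, a direct computation gives $\langle S_\om'(u),u\rangle = I_\om(u) = 0$ on one side, while on the other side $\langle I_\om'(u),u\rangle = 2\|u'\|_2^2 - (2\mu+2)\|u\|_{2\mu+2}^{2\mu+2} - 2v|u(0-)|^2 + 2\omega\|u\|_2^2 = -2\mu\|u\|_{2\mu+2}^{2\mu+2}$, where I used $I_\om(u)=0$ again to simplify. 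Since $u\ne 0$, this quantity is strictly negative, forcing $\lambda=0$. Hence $S_\om'(u)=0$ in $(\Htau)^*$; this is exactly what justifies calling the Nehari manifold the natural constraint.

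Next, I unpack $\langle S_\om'(u),\varphi\rangle = 0$ for every $\varphi \in \Htau$, which reads
\begin{equation*}
\int_{\rr\setminus\{0\}} u'\,\overline{\varphi'} \dx - \int_\rr |u|^{2\mu} u\,\overline{\varphi} \dx + \omega \int_\rr u\,\overline{\varphi} \dx - v\, u(0-)\,\overline{\varphi(0-)} = 0.
\end{equation*}
First I localize: choosing $\varphi \in C_c^\infty(\rr\setminus\{0\})$ (which lies in $\Htau$) yields, in the distributional sense, $-u'' - |u|^{2\mu} u + \omega u = 0$ on each half-line. Standard elliptic bootstrap on $\rr_\pm$ then upgrades $u$ to $H^2(\rr\setminus\{0\})$, giving the first line of \eqref{condition1}.

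Finally, I extract the vertex condition. Testing now against a general $\varphi \in \Htau$ and integrating by parts separately on $\rr_-$ and $\rr_+$ (using decay at $\pm\infty$ from $u\in\Htau\cap H^2(\rr\setminus\{0\})$), the bulk terms vanish by the half-line equations and there remain the boundary contributions
\begin{equation*}
\bigl[u'(0-)\,\overline{\varphi(0-)} - u'(0+)\,\overline{\varphi(0+)}\bigr] - v\, u(0-)\,\overline{\varphi(0-)} = 0.
\end{equation*}
Since $\varphi\in\Htau$, we have $\varphi(0+) = \tau\,\varphi(0-)$, so the above collapses to
\begin{equation*}
\bigl[u'(0-) - \tau\, u'(0+) - v\, u(0-)\bigr]\,\overline{\varphi(0-)} = 0,
\end{equation*}
and the arbitrariness of the trace $\varphi(0-)\in\cc$ yields $u'(0-) - \tau\, u'(0+) = v\, u(0-)$. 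The condition $u(0+) = \tau u(0-)$ is built into $\Htau$, completing \eqref{condition1}.

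The only genuinely delicate step is the vanishing of the Lagrange multiplier: once $\lambda=0$ is established, the rest is bookkeeping in the integration by parts and exploiting which boundary data are free (the trace $\varphi(0-)$) versus constrained (the trace $\varphi(0+)=\tau\varphi(0-)$). I expect no further obstacle.
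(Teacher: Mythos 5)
Your argument is correct and follows essentially the same route as the paper: the Lagrange multiplier is killed by testing $S_\om'(u)=\nu I_\om'(u)$ against $u$ and using $\langle S_\om'(u),u\rangle=I_\om(u)=0$ together with $\langle I_\om'(u),u\rangle=-2\mu\|u\|_{2\mu+2}^{2\mu+2}\neq 0$, after which localization and integration by parts with the constraint $\varphi(0+)=\tau\varphi(0-)$ yield the equation on each half-line and the vertex condition. Your explicit remarks on the elliptic bootstrap to $H^2(\rr\setminus\{0\})$ and on the arbitrariness of the free trace $\varphi(0-)$ are minor refinements of the same proof, not a different approach.
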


\begin{proof}
Let $u$ be a stationary state for the functional $S_\om$ constrained on the Nehari manifold, then there exists a Lagrange's multiplier $\nu \in \mathbb{R}$ such that $S'_{\om}(u)=\nu I'_{\om}(u)$ and $\langle S'_{\om}(u),u \rangle= \nu \langle I'_{\om}(u),u \rangle$.\\ On the other hand, by direct computation and stationarity, one obtains 
\begin{align*}
    \langle S'_{\om}(u),u \rangle & = I_{\om}(u) =0,\\
    \langle I'_{\om}(u),u \rangle  & = -2\mu||u||^{2\mu+2}_{2\mu+2}.
\end{align*} Hence, $\nu=0$ and the Euler-Lagrange equation becomes $S'_\om(u)=0$.\\
For any $\eta \in \Htau$ it follows that
\begin{align*}
\langle S'_\om(u), \eta \rangle =
\intRneg u'\eta' \dx & + \intRpos u'\eta' \dx + \\ & - \int_{-\infty}^{+\infty} (|u|^{2\mu+1} - \omega u) \eta \dx - v u(0-)\eta(0-) = 0.
\end{align*}
If we pick one of the two halflines and consider $\eta \in C_c ^{\infty}(\rr_+)$ or $\eta \in C_c ^{\infty}(\rr_-)$, the term $v u(0-)\eta(0-)$ vanishes and the equation $u''+|u|^{2\mu}u=\omega u$ holds on the halfline.\\ In order to verify the conditions at the origin, we proceed integrating by parts the l.h.s of the equation; it follows that, for any $\eta \in \Htau$, it holds:
\begin{align*}
u'\eta \bigg|_{-\infty}^0 + u'\eta \bigg |_0^{+\infty}  & -  \intRneg (u''+|u|^{2\mu+1} - \omega u) \eta \dx + \\ & -  \intRpos (u''+|u|^{2\mu+1} - \omega u) \eta \dx - v u(0-)\eta(0-) = 0.
\end{align*}

\noindent Hence, 
\begin{equation*}
u'(0-)\eta(0-)-u'(0+)\eta(0+) = v u(0-)\eta(0-)
\end{equation*}
\noindent and finally
\begin{equation*}
    u'(0-)-\tau u'(0+) = vu(0-),
\end{equation*}
concluding the proof.
\end{proof}

\noindent In the following result, we show that there exists a threshold such that there are no solutions if $\om$ is below that value. On the other hand, when $\om$ is above the threshold, there exist one or two solutions whose profile is given by pieces of the soliton  
\begin{equation}
\label{soliton}
\phi_{\om}(x)=\left( \omega (\mu+1) \right)^{\frac{1}{2\mu}} \cosh^{-\frac{1}{\mu}} \left( \mu \sqrt{\omega} x \right),
\end{equation}

\noindent one on each halfline and they match at the origin through the F\"{u}l\"{o}p-Tsutsui conditions (\ref{condition1}).

\begin{thm}
\label{stationary}
For $\om \leq \frac{v^2}{(\tau^2+1)^2}$ the system (\ref{condition1}) has no solutions. For every $\om \in \left( \frac{v^2}{(\tau^2+1)^2}, \frac{v^2}{(\tau^2-1)^2} \right]$ there exists a unique solution. Finally, for $\om > \frac{v^2}{(\tau^2-1)^2}$ a new branch of solutions arises separately from the previous one and there are two solutions (see Figure \ref{biforcazione}).\\
All solutions have the form
\begin{equation}
\label{sol}
u_{\om}(x) = 
\bigg \{ \begin{array}{rl}
\phi_{\om}(x+x_-), & x \in \mathbb{R}_{-} \\
\phi_{\om}(x+x_+), & x \in \mathbb{R}_{+} \\
\end{array}
\end{equation}
where  $\phi_\om$ was defined in (\ref{soliton}) and $x_-, x_+ \in \mathbb{R}$ are given by the solutions of the system 
\begin{equation}
\label{condition3}
\Bigg \{ \begin{array}{rl}
T_+=\frac{1}{\tau^2} \left( T_- + \frac{v}{\sqrt{\omega}} \right) \\
\frac{{T_-}^2}{1-\frac{1}{\tau^{2\mu}}}-\frac{{T_+}^2}{\tau^{2\mu}-1}=1 \\
\end{array}
\end{equation} 
in the unknowns $T_\pm=T_\pm(\om)=\tanh(\mu \sqrt{\om} x_\pm)$.
\end{thm}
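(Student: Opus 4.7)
The plan is to reduce the boundary-value problem (\ref{condition1}) to the algebraic system (\ref{condition3}) for the pair $(T_-,T_+)$ and then count admissible roots as $\om$ varies. This will be carried out in three stages: (i) classify the $H^2$-decaying solutions of the nonlinear ODE on each halfline; (ii) translate the F\"ul\"op-Tsutsui junction conditions into the two equations of (\ref{condition3}); (iii) analyze the resulting line--conic intersection inside the open square $(-1,1)^2$ and locate the thresholds $v^2/(\tau^2+1)^2$ and $v^2/(\tau^2-1)^2$.

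For stage (i) I would invoke the classical phase-plane fact that any $H^2$ solution of $-u''+\om u-|u|^{2\mu}u=0$ on a halfline which decays at the unbounded end is, up to sign, a translate of the soliton $\phi_\om$ given in (\ref{soliton}); this follows from the first integral $(u')^2=\om u^2-u^{2\mu+2}/(\mu+1)$ and the fact that a decaying sign-changing solution would force a double zero, hence $u\equiv 0$. After replacing $u$ by $-u$ if necessary, one may assume $u(0-)>0$ and (absorbing a sign flip when $\tau<0$ into the translation parameter) write $u(x)=\phi_\om(x+x_\pm)$ on $\rr_\pm$, which is exactly the form (\ref{sol}).

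For stage (ii), using the identity $\phi_\om'(y)/\phi_\om(y)=-\sqrt{\om}\tanh(\mu\sqrt{\om}y)$, the continuity condition $\phi_\om(x_+)=\tau\phi_\om(x_-)$, raised to the power $-2\mu$ and combined with $\cosh^{-2}=1-\tanh^2$, gives $1-T_+^2=\tau^{2\mu}(1-T_-^2)$, which rearranges into the hyperbola-type second equation of (\ref{condition3}). Substituting this back into the jump condition $\phi_\om'(x_-)-\tau\phi_\om'(x_+)=v\phi_\om(x_-)$ and dividing by $\sqrt{\om}\,\phi_\om(x_-)$ yields the linear relation $\tau^2 T_+-T_-=v/\sqrt{\om}$, which is the first equation of (\ref{condition3}). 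For stage (iii), setting $s:=v/\sqrt{\om}$ and eliminating $T_+$ produces the single quadratic
\[(1-\tau^{4+2\mu})T_-^2+2s\,T_-+s^2-\tau^4(1-\tau^{2\mu})=0,\]
and admissibility requires $(T_-,T_+)\in(-1,1)^2$. Evaluating the line $\tau^2 T_+=T_-+s$ at the four corners of the unit square shows that it touches $(-1,1)$ iff $s=\tau^2+1$, and touches either $(1,1)$ (when $\tau^2>1$) or $(-1,-1)$ (when $\tau^2<1$) iff $s=|\tau^2-1|$; at both values the relevant corner also lies on the conic $1-T_+^2=\tau^{2\mu}(1-T_-^2)$. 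These are precisely the two thresholds in the statement. A sign analysis of the quadratic at $T_-=\pm 1$ together with an inspection of its discriminant then produces the counting: no admissible root for $\om\le v^2/(\tau^2+1)^2$, exactly one for $\om\in(v^2/(\tau^2+1)^2,\,v^2/(\tau^2-1)^2]$, and two for $\om>v^2/(\tau^2-1)^2$.

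The main obstacle is stage (iii): the geometric picture of a line of slope $1/\tau^2$ cutting a hyperbola and sweeping across the unit square as $\om$ increases is transparent, but the algebra must be handled with some care because both the sign of the leading coefficient $1-\tau^{4+2\mu}$ and the orientation of the hyperbola (axis along $T_-$ when $\tau^2>1$, along $T_+$ when $\tau^2<1$) flip between the regimes $\tau^2<1$ and $\tau^2>1$. In each regime one must verify separately that no second admissible root appears before $\om$ reaches $v^2/(\tau^2-1)^2$, and that once it appears it remains inside $(-1,1)^2$ for all larger $\om$; this is the content of the bifurcation picture referenced in the statement.
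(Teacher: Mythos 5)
Your proposal is correct and follows essentially the same route as the paper: decaying halfline solutions are translates of the soliton, the F\"ul\"op--Tsutsui conditions reduce to the system (\ref{condition3}) via $1-T_+^2=\tau^{2\mu}(1-T_-^2)$ and $\tau^2T_+-T_-=v/\sqrt{\om}$, and the solution count comes from intersecting the line with the hyperbola inside the open unit square. The only cosmetic difference is in the last step, where the paper writes the two roots explicitly and checks when they lie in $(-1,1)^2$, while you keep the equivalent quadratic in $T_-$ implicit and locate the thresholds by a corner analysis; both arguments are at the same level of detail on the case-checking you flag as the main obstacle.
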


\begin{proof}
By standard results \cite{anv}, it is known that the only solution of the equation $-u''+|u|^{2\mu}u-\omega u=0$ on each halfline is given by $\phi_\om(x+\bar{x})$, where $\bar{x}$ is a suitable real number and $\phi_\om$ was defined in (\ref{soliton}). Hence, on the real line $\rr$, the solution is given by (\ref{sol}). Therefore, in order to study the existence of the solutions of the system (\ref{condition3}), we need to check for which $x_\pm \in \rr$ the F\"{u}l\"{o}p-Tsutsui conditions are satisfied.\\
From the discontinuity condition $u(0+)=\tau u(0-)$ and thanks to (\ref{sol}) and (\ref{soliton}), it follows that
\begin{align*}
\phi_\om(x_+) & =\tau \phi_\om(x_-) \\
\cosh^{-\frac{1}{\mu}} \left( \mu \sqrt{\omega} x_+ \right) & =\tau \cosh^{-\frac{1}{\mu}} \left( \mu \sqrt{\omega} x_- \right) \\
(1-T_+^2)^{\frac{1}{2\mu}} & = \tau (1-T_-^2)^{\frac{1}{2\mu}} \\
(1-T_+^2) & = \tau^{2\mu} (1-T_-^2)
\end{align*}
where we used the fact that $\cosh^{-2}(x)=1-\tanh^{2} (x)$ and $T_\pm := \tanh \left( \mu \sqrt{\omega} x_\pm \right)$.\\
On the other hand, from $u'(0-)-\tau u'(0+)=v u(0-)$ and proceeding similarly, we get $T_+= \frac{1}{\tau ^2}\left(T_- + \frac{v}{\sqrt{\omega}} \right)$. \\
In this way the two conditions at the origin can be rewritten in the following system
\begin{equation*}
\Bigg \{ \begin{array}{rl}
T_+=\frac{1}{\tau^2} \bigg( T_- + \frac{v}{\sqrt{\omega}} \bigg) \\
\frac{{T_-}^2}{1-\frac{1}{\tau^{2\mu}}}-\frac{{T_+}^2}{\tau^{2\mu}-1}=1
\end{array}
\end{equation*}
So the proof is complete.
\end{proof}

\begin{rem}
Note that it is not restrictive to suppose that $\tau > 0$.
\end{rem}

\begin{figure}[H]
\centering
\includegraphics[width=0.55\columnwidth]{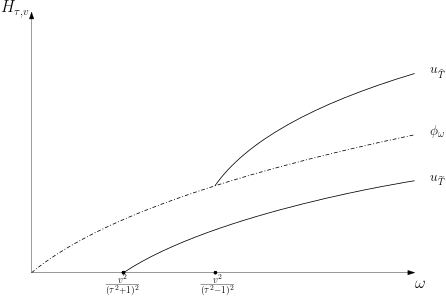}
\captionof{figure}{Qualitative graph of bifurcation for the stationary states depending on $\om$. Note that the dotted-dashed line refers to the soliton $\phi_\om \notin H_{\tau,v}$.}
\label{biforcazione}
\end{figure}

\subsection*{Solutions to the system (\ref{condition3})} 
System (\ref{condition3}) has an easy geometric representation, as shown in Figure \ref{iperbole}. 
Indeed, one can observe that the first equation of (\ref{condition3}) describes a line that approaches the origin for increasing $\omega$, but never reaches it because $v \neq 0$. On the other hand, the second equation represents a hyperbola that does not depend on $\om$ and crosses the vertices of the unitary square. The intersections between the line and the hyperbola give us the solutions to the system.\\ Moreover, for $\tau=\bar{\tau}$ and $\tau=\frac{1}{\bar{\tau}}$, there is a symmetry respect to the line $y=-x$ between the two hyperbola, whereas this symmetry is reached by the line only in the limit $\om \rightarrow \infty$.

\begin{figure}[H]
\centering
\includegraphics[width=0.6\columnwidth]{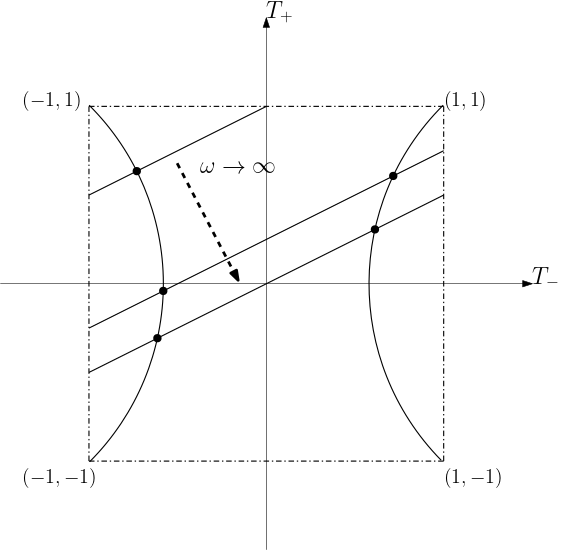}
\captionof{figure}{Geometric representation of the system (\ref{condition3}) for $\tau > 1$, where the dots represent the solutions to the system for $\om \rightarrow \infty$.}
\label{iperbole}
\end{figure}

\noindent By direct computation we obtain two couples of solutions, $(\widetilde{T}_-,\widetilde{T}_+)$ and $(\widehat{T}_- ,\widehat{T}_+)$, where:
\begin{align*}
\widetilde{T}_- = \widetilde{T}_-(\om) & =\tanh(\mu \sqrt{\omega} \widetilde{x}_-)= \\ 
                               & =\frac{1}{\tau^{2\mu+4}-1} \bigg(\frac{v}{\sqrt{\omega}}-\tau^2 \sqrt{\frac{v^2}{\omega}\tau^{2\mu}+(\tau^{2\mu+4}-1)(\tau^{2\mu}-1)} \bigg), \\
\widetilde{T}_+=\widetilde{T}_+(\om) & =\tanh(\mu \sqrt{\omega} \widetilde{x}_+)= \\ 
                             & \frac{1}{\tau^{2\mu+4}-1} \bigg(\tau^{2\mu+2}\frac{v}{\sqrt{\omega}}-\sqrt{\frac{v^2}{\omega}\tau^{2\mu}+(\tau^{2\mu+4}-1)(\tau^{2\mu}-1)} \bigg)
\end{align*}
and
\begin{align*}
\widehat{T}_-=\widehat{T}_-(\om) & =\tanh(\mu \sqrt{\omega} x_-)=\\ 
             & \frac{1}{\tau^{2\mu+4}-1} \bigg( \frac{v}{\sqrt{\omega}}+\tau^2 \sqrt{\frac{v^2}{\omega}\tau^{2\mu}+(\tau^{2\mu+4}-1)(\tau^{2\mu}-1)} \bigg), \\
\widehat{T}_+=\widehat{T}_+(\om) & =\tanh(\mu \sqrt{\omega} x_+)=\\
             & \frac{1}{\tau^{2\mu+4}-1} \bigg( \tau^{2\mu+2}\frac{v}{\sqrt{\omega}}+\sqrt{\frac{v^2}{\omega}\tau^{2\mu}+(\tau^{2\mu+4}-1)(\tau^{2\mu}-1)}  \bigg).
\end{align*}
\noindent Let us note that, since $\widehat{T}_\pm$ and $\widetilde{T}_\pm$ are defined as hyperbolic tangents, the solutions of (\ref{condition3}) must belong to the open unitary square. This is the reason why there are no admissible solutions for $\om \leq \frac{v^2}{(\tau^2+1)^2}$, there is a unique solution for any $\om \in \left( \frac{v^2}{(\tau^2+1)^2}, \frac{v^2}{(\tau^2-1)^2} \right]$ and there are two for  $\om > \frac{v^2}{(\tau^2-1)^2}$. \\\\ In order to identify which one between the two couples is the unique solution for any $\om \in \left( \frac{v^2}{(\tau^2+1)^2}, \frac{v^2}{(\tau^2-1)^2} \right]$, observe that neither $\widehat{T}_-$, nor $\widehat{T}_+$ does not change sign depending on $\om$: it is always positive if $\tau>1$ or always negative for $\tau<1$. \\
Since the first solution appears in the second quadrant, where $T_-$ is negative and $T_+$ is positive, we conclude that for $\om \in \left( \frac{v^2}{(\tau^2+1)^2}, \frac{v^2}{(\tau^2-1)^2} \right]$ the unique solution must be given by $(\widetilde{T}_-,\widetilde{T}_+)$.       \\
Finally, by the equivalence $\arctanh(x)= \frac{1}{2}\ln\left(\frac{1+x}{1-x} \right)$ we get the following identities:
\begin{align*}
\widetilde{x}_- & =\frac{1}{2\mu \sqrt{\om}} \ln \left( \frac{1-\tau^{2\mu+4}-\frac{v}{\sqrt{\om}}+\tau^2 \sqrt{\frac{v^2}{\omega}\tau^{2\mu}+(\tau^{2\mu+4}-1)(\tau^{2\mu}-1)}}{1-\tau^{2\mu+4}+\frac{v}{\sqrt{\om}}-\tau^2 \sqrt{\frac{v^2}{\omega}\tau^{2\mu}+(\tau^{2\mu+4}-1)(\tau^{2\mu}-1)}} \right), \\
\widetilde{x}_+ & =\frac{1}{2\mu \sqrt{\om}} \ln \left( \frac{1-\tau^{2\mu+4}-\tau^{2\mu+2} \frac{v}{\sqrt{\om}}+\sqrt{\frac{v^2}{\omega}\tau^{2\mu}+(\tau^{2\mu+4}-1)(\tau^{2\mu}-1)}}{1-\tau^{2\mu+4}+\tau^{2\mu+2}\frac{v}{\sqrt{\om}}- \sqrt{\frac{v^2}{\omega}\tau^{2\mu}+(\tau^{2\mu+4}-1)(\tau^{2\mu}-1)}} \right)
\end{align*}
and
\begin{align*}
\widehat{x}_- & =\frac{1}{2\mu \sqrt{\om}} \ln \left( \frac{1-\tau^{2\mu+4}-\frac{v}{\sqrt{\om}}-\tau^2 \sqrt{\frac{v^2}{\omega}\tau^{2\mu}+(\tau^{2\mu+4}-1)(\tau^{2\mu}-1)}}{1-\tau^{2\mu+4}+\frac{v}{\sqrt{\om}}+\tau^2 \sqrt{\frac{v^2}{\omega}\tau^{2\mu}+(\tau^{2\mu+4}-1)(\tau^{2\mu}-1)}} \right), \\
\widehat{x}_+ & =\frac{1}{2\mu \sqrt{\om}} \ln \left( \frac{1-\tau^{2\mu+4}-\tau^{2\mu+2} \frac{v}{\sqrt{\om}}-\sqrt{\frac{v^2}{\omega}\tau^{2\mu}+(\tau^{2\mu+4}-1)(\tau^{2\mu}-1)}}{1-\tau^{2\mu+4}+\tau^{2\mu+2}\frac{v}{\sqrt{\om}}+ \sqrt{\frac{v^2}{\omega}\tau^{2\mu}+(\tau^{2\mu+4}-1)(\tau^{2\mu}-1)}} \right).
\end{align*}

\noindent In the following we will refer to the stationary states of $S_\om$ as $u_{\widetilde{T}}$ and $u_{\widehat{T}}$.

\begin{figure}[H]
\subfloat{ \begin{tikzpicture}[xscale= 0.5,yscale=1.5]
    \draw[step=2,thin] (-7,0) -- (7,0);
\node at (0,0) [nodo] {};
\draw[dashed,thin] (-8.2,0)--(-7.2,0)  (7.2,0)--(8.2,0); 
\draw[color=black,domain=-7:0] plot(\x, {2 / cosh(\x - 2.5)  });
\draw[color=black,domain=0:7] plot(\x, {2 / cosh(\x + 2) });
\end{tikzpicture}}
\subfloat{\includegraphics[width=2in]{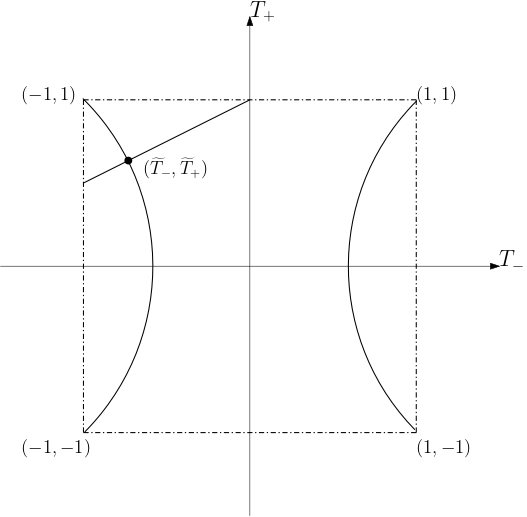}}
\end{figure}
\begin{figure}[H]
\subfloat{\begin{tikzpicture}[xscale= 0.5,yscale=1.5]
    \draw[step=2,thin] (-7,0) -- (7,0);
\node at (0,0) [nodo] {};
\draw[dashed,thin] (-8.2,0)--(-7.2,0)  (7.2,0)--(8.2,0); 
\draw[color=black,domain=-7:0] plot(\x, {2 / cosh(\x - 2.5)  });
\draw[color=black,domain=0:7] plot(\x, {2 / cosh(\x ) });
\end{tikzpicture}}
\subfloat{\includegraphics[width=2in]{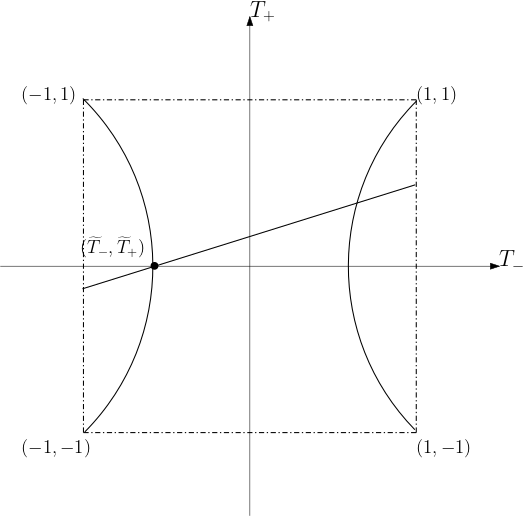}}
\end{figure}
\begin{figure}[H]
\subfloat{\begin{tikzpicture}[xscale= 0.5,yscale=1.5]
    \draw[step=2,thin] (-7,0) -- (7,0);
\node at (0,0) [nodo] {};
\draw[dashed,thin] (-8.2,0)--(-7.2,0)  (7.2,0)--(8.2,0); 
\draw[color=black,domain=-7:0] plot(\x, {2 / cosh(\x - 2.5)  });
\draw[color=black,domain=0:7] plot(\x, {2 / cosh(\x - 1.7) });
\end{tikzpicture}}
\subfloat{\includegraphics[width=2in]{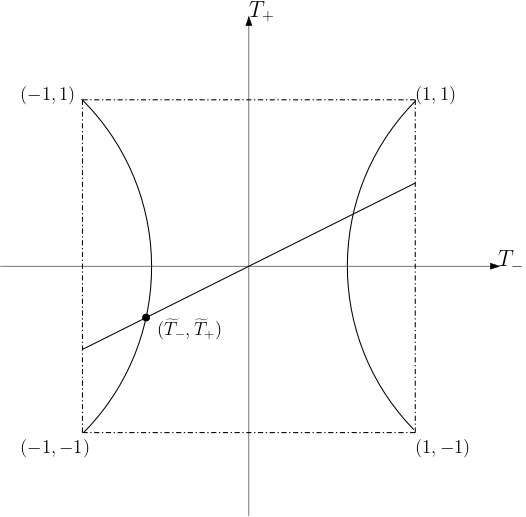}}
\end{figure}
\captionof{figure}{A sketch of the stationary state $u_{\widetilde{T}}(x)=\chi_- \phi_{\om}(x+\widetilde{x}_-)+\chi_+ \phi_{\om}(x+\widetilde{x}_+)$ corresponding to the solution $(\widetilde{T}_-,\widetilde{T}_+)$ to the system \eqref{condition3} and $\widetilde{T}_\pm=\tanh(\mu \sqrt{\om} \widetilde{x}_\pm)$. It has always the profile of a tail of a soliton on the negative halfline, whereas on the positive halfline, depending on $\om$, it can be a tail, a half soliton or presents a bump.}
\label{st1}

\begin{figure}[H]
\subfloat{
\begin{tikzpicture}[xscale= 0.5,yscale=1.5]
\draw[step=2,thin] (-7,0) -- (7,0);
\node at (0,0) [nodo] {};
\draw[dashed,thin] (-8.2,0)--(-7.2,0)  (7.2,0)--(8.2,0); 
\draw[color=black,domain=-7:0] plot(\x, {2 / cosh(\x + 5)  });
\draw[color=black,domain=0:7] plot(\x, {2 / cosh(\x + 2.5) });
\end{tikzpicture}
}
\subfloat{\includegraphics[width=2in]{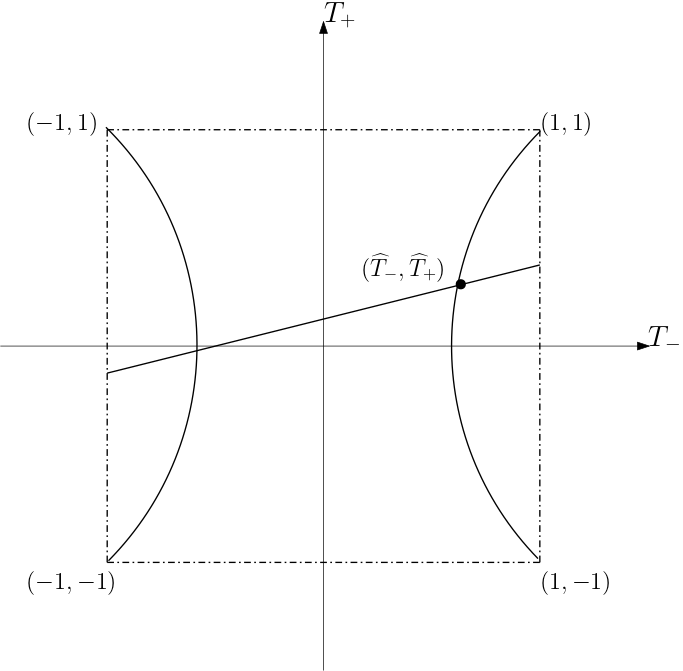}}
\end{figure}
\captionof{figure}{A sketch of the stationary state $u_{\widehat{T}}(x)=\chi_- \phi_{\om}(x+\widehat{x}_-)+\chi_+ \phi_{\om}(x+\widehat{x}_+)$ corresponding to the solution $(\widehat{T}_-,\widehat{T}_+)$ to the system \eqref{condition3} and $\widehat{T}_\pm=\tanh(\mu \sqrt{\om} \widehat{x}_\pm)$. This stationary state, regardless of $\om$, has always the profile of a tail of a soliton on the positive halfline and has a bump on the negative one.}
\label{st2}
\vspace{1cm}

 \subsection{Identification of the Ground State}
 
The main aim of this subsection is to identify the ground state among the stationary states of the action functional under the Nehari's constraint. But before going on with the search, we present some useful identities that hold for the stationary states presented previously.

\begin{prop}
\label{norme}
\label{ident} 
Let $u_{\widehat{T}}$ and $u_{\widetilde{T}}$ be the stationary states that solve the equation $u''+|u|^{2\mu}u=\omega u$ on each halfline and satisfy the F\"{u}l\"{o}p-Tsutsui conditions at the origin. Then the following identities hold:
\begin{align*}
\|u'_{\widehat{T}} \|^2_2  = \frac{(\mu+1)^{\frac{1}{\mu}}}{2}\om^{\frac{1}{\mu}+\frac{1}{2}} \bigg( & \intone (1-t^2)^{\frac{1}{\mu}} \dt - \int_{{\widehat{T}}_-(\om)}^{{\widehat{T}}_+(\om)} (1-t^2)^{\frac{1}{\mu}} \dt + \\ & + {\widehat{T}}_+(\om)(1-{\widehat{T}}_+^2(\om))^{\frac{1}{\mu}}-{\widehat{T}}_-(\om)(1-{\widehat{T}}_-^2(\om))^{\frac{1}{\mu}} \bigg),
\end{align*}
\begin{align*}
||u_{\widehat{T}}||^2_2 &= \frac{(\mu+1)^{\frac{1}{\mu}}}{\mu} \om^{\frac{1}{\mu}-\frac{1}{2}} \left( \int_{-1}^{1} (1-t^2)^{\frac{1}{\mu}-1}\dt - \int_{{\widehat{T}}_-(\om)}^{{\widehat{T}}_+(\om)} (1-t^2)^{\frac{1}{\mu}-1}\dt \right),\\
\|u_{\widehat{T}} \|^{2\mu+2}_{2\mu+2} & =\frac{(\mu+1)^{\frac{1}{\mu}+1}}{\mu} \om^{\frac{1}{\mu}+\frac{1}{2}} \left( \intone (1-t^2)^{\frac{1}{\mu}} \dt - \int_{{\widehat{T}}_-(\om)}^{{\widehat{T}}_+(\om)} (1-t^2)^{\frac{1}{\mu}} \dt \right),\\
|u_{\widehat{T}}(0-)|^2 & = (\mu+1)^{\frac{1}{\mu}} \om^{\frac{1}{\mu}} (1-{\widehat{T}}_-^2(\om))^{\frac{1}{\mu}}.
\end{align*}
Similarly, we get the same identities for $u_{\widetilde{T}}.$
\begin{proof} By direct computation,
\begin{align*}
\|u'_{\widehat{T}} \|^2_2  & = \intRneg |u'_{\widehat{T}} |^2 \dx + \intRpos |u'_{\widehat{T}} |^2 \dx  \\ 
               & = \intRneg |\phi_\om'(x+{\widehat{x}}_-)|^2 \dx + \intRpos |\phi_\om'(x+{\widehat{x}}_+)|^2 \dx  \\
               & = \int_{-\infty}^{\widehat{x}_-} |\phi_\om'(s)|^2 \ds + \int_{\widehat{x}_+}^{+\infty} |\phi_\om'(s)|^2 \ds  \\
               & = (\mu+1)^{\frac{1}{\mu}}\om^{\frac{1}{\mu}+1} \bigg(  \int_{-\infty}^{\widehat{x}_-} \cosh^{-{\frac{2}{\mu}}} \left( \mu \sqrt{\omega} s \right)  \tanh^{2} \left( \mu \sqrt{\omega} s\right) \ds + \\ 
               & + \int_{\widehat{x}_+}^{+\infty} \cosh^{-{\frac{2}{\mu}}} \left( \mu \sqrt{\omega} s \right)  \tanh^{2} \left( \mu \sqrt{\omega} s\right) \ds \bigg) \\
               & = \frac{(\mu+1)^{\frac{1}{\mu}}}{\mu} \om^{\frac{1}{\mu}+\frac{1}{2}} \bigg( \int_{-\infty}^{\mu \sqrt{\omega} {\widehat{x}}_-} \cosh^{-\frac{2}{\mu}}(x) \tanh^{2}(x)\dx + \\
               & + \int_{\mu \sqrt{\omega} {\widehat{x}}_+}^{+\infty} \cosh^{-\frac{2}{\mu}}(x) \tanh^{2}(x) \dx \bigg)  \\
               & = \frac{(\mu+1)^{\frac{1}{\mu}}}{\mu} \om^{\frac{1}{\mu}+\frac{1}{2}} \bigg( \intone (1-t^2)^{\frac{1}{\mu}-1}t^2 \dt - \int_{\widehat{T}_-}^{{\widehat{T}}_+} (1-t^2)^{\frac{1}{\mu}-1}t^2 \dt \bigg),
\end{align*}
where for the last equality we used the identity $\cosh^{-2}(x)=1+\tanh^2(x)$ and the change of variable $t=\tanh (x)$.\\
Integrating by parts one obtains the first identity of Proposition \ref{norme} and similarly the others.
\end{proof}
\end{prop}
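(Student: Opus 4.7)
The plan is to prove all four identities by direct computation, using the explicit form of the stationary states $u_{\widehat{T}}(x) = \chi_-(x)\phi_\om(x+\widehat{x}_-) + \chi_+(x)\phi_\om(x+\widehat{x}_+)$ from Theorem \ref{stationary} together with the explicit soliton \eqref{soliton}. The argument for $u_{\widetilde{T}}$ is entirely analogous since only the translation parameters change.

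First I would handle the pointwise value $|u_{\widehat{T}}(0-)|^2$, since it is essentially immediate: by definition $u_{\widehat{T}}(0-)=\phi_\om(\widehat{x}_-)$, and using $\cosh^{-2}(y)=1-\tanh^2(y)$ together with $\widehat{T}_-=\tanh(\mu\sqrt{\om}\widehat{x}_-)$ gives $(\mu+1)^{1/\mu}\om^{1/\mu}(1-\widehat{T}_-^2)^{1/\mu}$. Next I would compute the $L^{2}$ and $L^{2\mu+2}$ norms, which share the same scheme. Splitting the integrals on $\rr_-$ and $\rr_+$ and translating $s=x+\widehat{x}_\pm$, I get contributions over $(-\infty,\widehat{x}_-)$ and $(\widehat{x}_+,+\infty)$, which I rewrite as the full line integral minus $(\widehat{x}_-,\widehat{x}_+)$. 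Substituting $y=\mu\sqrt{\om}s$ pulls out the prefactor $\frac{1}{\mu\sqrt{\om}}$ and leaves an integrand of the form $\cosh^{-2\beta/\mu}(y)\,dy$ (with $\beta=1$ for $L^2$ and $\beta=\mu+1$ for $L^{2\mu+2}$). Then the change of variable $t=\tanh(y)$, for which $dy=dt/(1-t^2)$ and $\cosh^{-2}(y)=1-t^2$, converts the integrand to $(1-t^2)^{\beta/\mu-1}\,dt$, yielding precisely the stated formulas with the integration limits $\widehat{T}_\pm$.

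For $\|u'_{\widehat{T}}\|_2^2$ the first part of the computation is the same as the one already displayed in the excerpt, producing
\begin{equation*}
\|u'_{\widehat{T}}\|_2^2 = \frac{(\mu+1)^{1/\mu}}{\mu}\,\om^{1/\mu+1/2}\left(\intone (1-t^2)^{1/\mu-1}t^2\,dt - \int_{\widehat{T}_-}^{\widehat{T}_+}(1-t^2)^{1/\mu-1}t^2\,dt\right).
\end{equation*}
The extra step is to integrate by parts in $t$ on each of these two integrals. Using $t(1-t^2)^{1/\mu-1}\,dt = -\tfrac{\mu}{2}\,d\bigl[(1-t^2)^{1/\mu}\bigr]$, I pick $u=t$ and $dv=t(1-t^2)^{1/\mu-1}dt$, so that
\begin{equation*}
\int_a^b t^2(1-t^2)^{1/\mu-1}\,dt = -\frac{\mu}{2}\Bigl[t(1-t^2)^{1/\mu}\Bigr]_a^b + \frac{\mu}{2}\int_a^b (1-t^2)^{1/\mu}\,dt.
\end{equation*}
On $(-1,1)$ the boundary term vanishes, while on $(\widehat{T}_-,\widehat{T}_+)$ it contributes $-\tfrac{\mu}{2}[\widehat{T}_+(1-\widehat{T}_+^2)^{1/\mu}-\widehat{T}_-(1-\widehat{T}_-^2)^{1/\mu}]$. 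Subtracting and multiplying by the prefactor produces the claimed expression with the overall constant $\tfrac{(\mu+1)^{1/\mu}}{2}\om^{1/\mu+1/2}$.

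The computation is entirely mechanical; the only place where one has to be a little careful is the integration by parts for the derivative norm and, more pedantically, checking the signs of $\phi_\om'$ so that $|\phi_\om'|^2$ only depends on $\tanh^2$. There is no real obstacle; the main risk is bookkeeping errors in the exponents of $(\mu+1)$ and $\om$, which I would double-check by verifying the scaling in $\om$ against the Pohozaev-type balance $\|u'\|_2^2+\om\|u\|_2^2 = \|u\|_{2\mu+2}^{2\mu+2}+v|u(0-)|^2$ on stationary states. For $u_{\widetilde{T}}$, replacing $\widehat{T}_\pm$ by $\widetilde{T}_\pm$ throughout yields the corresponding identities verbatim.
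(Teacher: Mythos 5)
Your proposal is correct and follows essentially the same route as the paper: direct computation from the explicit form of the stationary states, translation and rescaling, the substitution $t=\tanh(\mu\sqrt{\om}s)$, and a final integration by parts (which you carry out explicitly, confirming the boundary term $\widehat{T}_+(1-\widehat{T}_+^2)^{1/\mu}-\widehat{T}_-(1-\widehat{T}_-^2)^{1/\mu}$ that turns the prefactor $\tfrac{(\mu+1)^{1/\mu}}{\mu}$ into $\tfrac{(\mu+1)^{1/\mu}}{2}$). You also correctly use $\cosh^{-2}(x)=1-\tanh^2(x)$, where the paper's text contains a sign typo.
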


\noindent Finally, recalling that by ground state we mean any global minimizer of the constrained action functional, we present the main theorem of the section.  

\begin{thm}
Let $\om > \frac{v^2}{(\tau^2+1)^2}$, then the ground state of the action functional $S_\om$ under the Nehari's constraint is $u_{\widetilde{T}}$. 

\begin{proof}
If $\om \in \left( \frac{v^2}{(\tau^2+1)^2},\frac{v^2}{(\tau^2-1)^2} \right]$, then $\widetilde{u}$ is the only stationary state existing for $S_\om$ so, thanks to Theorem \ref{existence} that guarantees the existence of a minimizer, it must be a ground state.\\ For $\om >\frac{v^2}{(\tau^2-1)^2}$, instead, there are two different stationary states, $u_{\widetilde{T}}$ and $u_{\widehat{T}}$, hence we need to compare $S_\om(u_{\widetilde{T}})$ and $S_\om(u_{\widehat{T}})$.\\ However, recalling that $S_\om(u)=\widetilde{S}(u)$ holds for any $u$ in the Nehari manifold, we reduce to compare $\widetilde{S}(u_{\widehat{T}})$ and $\widetilde{S}(u_{\widetilde{T}})$. \\By the explicit expressions of $\widetilde{T}_\pm$ and $\widehat{T}_\pm$, we can note that for $\tau >1$ the following inequalities hold 
\begin{align*}
    |{\widehat{T}}_-|&>|{\widehat{T}}_+|,\\ |\widetilde{T}_-|&>|\widetilde{T}_+|,\\
    \widetilde{T}_-  < \widetilde{T}_+ &< {\widehat{T}}_+ < {\widehat{T}}_-,
\end{align*} as one can immediately verify by Figure \ref{st1} and Figure \ref{st2}. Hence, $$\int_{\widetilde{T}_-} ^{\widetilde{T}_+}(1-t^2)^{\frac{1}{\mu}}\dt >0$$ and $$\int_{{\widehat{T}}_-} ^{{\widehat{T}}_+}(1-t^2)^{\frac{1}{\mu}}\dt <0.$$\\ On the other hand, for $\tau<1$ it follows that:
\begin{align*}
    |{\widehat{T}}_-|&<|{\widehat{T}}_+|,\\
    |\widetilde{T}_-| &<|\widetilde{T}_+|,\\
    {\widehat{T}}_+ < {\widehat{T}}_- & < \widetilde{T}_-  < \widetilde{T}_+,
\end{align*} but $$\int_{\widetilde{T}_-} ^{\widetilde{T}_+}(1-t^2)^{\frac{1}{\mu}}\dt >0$$ and $$\int_{{\widehat{T}}_-} ^{{\widehat{T}}_+}(1-t^2)^{\frac{1}{\mu}}\dt <0$$ still hold. \\ Hence, thanks to Proposition \ref{ident}, in both cases  $$\widetilde{S}(u_{\widetilde{T}})<\widetilde{S}(u_{\widehat{T}})$$ holds and we conclude the proof. 
\end{proof}
\end{thm}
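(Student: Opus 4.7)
The plan is to invoke Theorem \ref{existence} to guarantee that a global minimizer exists, observe that any such minimizer must be a critical point of the constrained functional and hence one of the stationary states classified in Theorem \ref{stationary}, and finally identify which one by directly comparing the values of the action.

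First I would dispatch the easy range $\om \in \left(\frac{v^2}{(\tau^2+1)^2},\frac{v^2}{(\tau^2-1)^2}\right]$: Theorem \ref{stationary} produces the unique stationary state $u_{\widetilde{T}}$ in that window, so the minimizer guaranteed by Theorem \ref{existence} has no choice but to coincide with it. The genuine work is for $\om > \frac{v^2}{(\tau^2-1)^2}$, where both $u_{\widetilde{T}}$ and $u_{\widehat{T}}$ satisfy \eqref{condition1} and lie on the Nehari manifold.

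Since both stationary states sit on the Nehari manifold, the identity $S_\om(u)=\widetilde{S}(u)=\frac{\mu}{2(\mu+1)}\|u\|_{2\mu+2}^{2\mu+2}$ applies to each, so comparing $S_\om(u_{\widetilde{T}})$ with $S_\om(u_{\widehat{T}})$ reduces, via the $L^{2\mu+2}$ identity of Proposition \ref{norme}, to comparing
$$\widetilde{S}(u_T) = C(\om,\mu)\left(\int_{-1}^{1}(1-t^2)^{1/\mu}\,dt - \int_{T_-(\om)}^{T_+(\om)}(1-t^2)^{1/\mu}\,dt\right),$$
with $C(\om,\mu)>0$ the same positive constant for both states. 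Thus the stationary state with the \emph{larger} value of $\int_{T_-}^{T_+}(1-t^2)^{1/\mu}\,dt$ is the one with the \emph{smaller} action, and is therefore the ground state.

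The concluding step is an inspection of the signs of these two integrals directly from the closed-form expressions of $\widetilde{T}_\pm$ and $\widehat{T}_\pm$ obtained after Theorem \ref{stationary}. Splitting into the cases $\tau>1$ and $\tau<1$, I would verify that $\widetilde{T}_-<\widetilde{T}_+$, so that $\int_{\widetilde{T}_-}^{\widetilde{T}_+}(1-t^2)^{1/\mu}\,dt>0$ because the integrand is strictly positive on $(-1,1)$, while $\widehat{T}_->\widehat{T}_+$, so that the corresponding integral is strictly negative. Combined with the previous display this gives $\widetilde{S}(u_{\widetilde{T}})<\widetilde{S}(u_{\widehat{T}})$, identifying $u_{\widetilde{T}}$ as the ground state. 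The main obstacle is executing the sign and ordering checks cleanly from the somewhat heavy radical expressions for $\widetilde{T}_\pm,\widehat{T}_\pm$; the geometric picture in Figure \ref{iperbole} (the two intersection points of the line with the hyperbola always fall in diagonally opposite quadrants) makes the outcome transparent, but the rigorous verification is a purely algebraic case split rather than an analytic one.
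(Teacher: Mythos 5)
Your proposal is correct and follows essentially the same route as the paper: dispatch the uniqueness window via Theorem \ref{existence}, then for $\om>\frac{v^2}{(\tau^2-1)^2}$ reduce the comparison to $\widetilde S$ via the Nehari identity and the $L^{2\mu+2}$ formula of Proposition \ref{norme}, and conclude by the sign of $\int_{T_-}^{T_+}(1-t^2)^{1/\mu}\,dt$, which is positive for $(\widetilde T_-,\widetilde T_+)$ and negative for $(\widehat T_-,\widehat T_+)$ in both cases $\tau>1$ and $\tau<1$. The paper carries out exactly this case split on the ordering of the four roots, so there is nothing substantively different to flag.
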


\begin{rem} 
The previous result can be easily visualized thanks to the profiles of the stationary states in Figure \ref{st1} and Figure \ref{st2}. Indeed, note that by the F\"ul\"op-Tsutsui conditions imposed at the origin, for every admissible $\om$, the stationary state $u_{\widetilde{T}}$ is always smaller than a soliton, whereas $u_{\widehat{T}}$ is always larger.
\end{rem}

\section{Stability of the ground state}

In this section we present some results on the orbital stability of the ground state and to this aim we will rely on the well known Grillakis-Shatah-Strauss theory \cite{gss1,gss2}. However, before proceeding with the investigation, we remind what we mean by orbital stability.

\begin{defn}
A stationary state $U$ is called orbitally stable if for any $\epsilon >0$ there exists $\delta >0$ such that
\begin{equation*}
\inf_{\theta \in [0,2\pi)} ||\psi_0 - e^{i \theta}U||_{\Htau} \leq \delta \Rightarrow \sup_{t \geq 0} \inf_{\theta \in [0,2\pi)} ||\psi(t) - e^{i \theta}U||_{\Htau} \leq \epsilon,
\end{equation*}
where $\psi(t)$ is the solution to the problem
\begin{equation*}
\Bigg \{ \begin{array}{lr}
i \partial_t \psi(t) = H_{\tau,v} \psi(t) - |\psi(t)|^{2\mu}\psi(t),\\
\psi(0)=\psi_0.
\end{array}
\end{equation*}
\end{defn}
\noindent The well-posedness of such problem was studied in \cite{adno}, hence here we focus on the issue of stability.\\
Let us consider $w \in \Htau$ and write $w(x)=a(x)+ib(x)$, with $a$ and $b$ real functions and introduce the second variation of the action around a real function $\psi$:
\begin{align*}
\label{secondvar}
S''_\om(\psi+sw)_{s=0} = & ||a'||^2_2+\om||a||^2_2-v|a(0-)|^2-(2\mu+1) \int_{-\infty}^{+\infty} a(x)^2 |\psi(x)|^{2\mu}\dx + \\
                                      & + ||b'||^2_2+\om||b||^2_2-v|b(0-)|^2- \int_{-\infty}^{+\infty} b(x)^2 |\psi(x)|^{2\mu}\dx.
\end{align*} 
Defining the operators $L_1$ and $L_2$ with domains $D(L_1)=D(L_2)=\{ H^2(\mathbb{R} \backslash \{0\} ), u(0+)=\tau u(0-), u'(0-) -\tau u'(0+)= vu(0-)\}$, acting as follows:
\begin{align*}
L_1 a & = -a'' + \om a-(2\mu+1)|\psi|^{2\mu} a,\\
L_2 b & = -b'' + \om b-|\psi|^{2\mu} b,
\end{align*}
we can rewrite the second variation as
\begin{equation*}
S''_\om(\psi+sw)_{s=0} = (L_1 a,a) + (L_2 b,b).
\end{equation*}

\noindent From now on, we consider the second variation of the action around the ground state $u_{\widetilde{T}}$ and prove the following propositions concerning the operators $L_1$ and $L_2$. They provide the spectral information required by the Grillakis-Shatah-Strauss theory.

\begin{prop}
\label{GSS1}
The operator $L_2$ is such that $Ker(L_2)=Span\{u_{\widetilde{T}}\}$ and $L_2 \geq 0$.
\begin{proof}
The first statement follows by (\ref{condition1}). To prove the second part of the proposition we note that at any $x \neq 0$, for any $\phi \in D(L_2)$ it holds
\begin{equation*}
-\phi''+\om \phi - |u_{\widetilde{T}}|^{2\mu} \phi= - \frac{1}{u_{\widetilde{T}}}\frac{d}{dx}\left(u_{\widetilde{T}}^2 \frac{d}{dx} \left( \frac{\phi}{u_{\widetilde{T.}}} \right) \right),
\end{equation*}
since $u_{\widetilde{T}}$ is a stationary state and never vanishes.\\
Recalling that $\phi$ is a complex function, while $u_{\widetilde{T}}$ is real, it follows that
\begin{align*}
(L_2 \phi, \phi)= & \int_{-\infty}^0 u_{\widetilde{T}}^2 \left| \frac{d}{dx} \left( \frac{\phi}{u_{\widetilde{T}}} \right) \right| ^2 dx + \int^{+\infty}_0 u_{\widetilde{T}}^2 \left| \frac{d}{dx} \left( \frac{\phi}{u_{\widetilde{T}}} \right) \right| ^2 dx +\\
                           &+ \phi'(0+) \overline{\phi(0+)}- \frac{u'_{\widetilde{T}}(0+)}{u_{\widetilde{T}}(0+)}|\phi(0+)|^2 - \phi'(0-) \overline{\phi(0-)}+ \frac{u'_{\widetilde{T}}(0-)}{u_{\widetilde{T}}(0-)}|\phi(0-)|^2,
\end{align*} 
where the first two integral terms are positive. On the other hand, we note that:
\begin{align*}
\phi'(0+) \overline{\phi(0+)}- \phi'(0-) \overline{\phi(0-)} & = \phi'(0+) \tau \overline{\phi(0-)}-\phi'(0-)\overline{\phi(0-)} \\
                                                                       & = -|\phi(0-)|^2v.           
\end{align*}
Hence 
\begin{align*}
\phi'(0+) \overline{\phi(0+)} - & \frac{u'_{\widetilde{T}}(0+)}{u_{\widetilde{T}}(0+)}|\phi(0+)|^2 - \phi'(0-) \overline{\phi(0-)}+ \frac{u'_{\widetilde{T}}(0-)}{u_{\widetilde{T}}(0-)}|\phi(0-)|^2 =\\
& = -v|\phi(0-)|^2 + \frac{u_{\widetilde{T}}(0+) u'_{\widetilde{T}}(0-)|\phi(0-)|^2 - u_{\widetilde{T}}(0-) u'_{\widetilde{T}}(0+)|\phi(0+)|^2}{u_{\widetilde{T}}(0-)u_{\widetilde{T}}(0+)} 
\end{align*}
\begin{equation*}
 = \frac{-vu_{\widetilde{T}}(0-)u_{\widetilde{T}}(0+)|\phi(0-)|^2+u_{\widetilde{T}}(0+) u'_{\widetilde{T}}(0-)|\phi(0-)|^2 - u_{\widetilde{T}}(0-) u'_{\widetilde{T}}(0+)|\phi(0+)|^2}{u_{\widetilde{T}}(0-)u_{\widetilde{T}}(0+)}
\end{equation*}
because $-vu_{\widetilde{T}}(0-)=-u'_{\widetilde{T}}(0-)+\tau u'_{\widetilde{T}}(0+)$, it follows:
\begin{align*}
& = \frac{\tau  u'_{\widetilde{T}}(0+) u_{\widetilde{T}}(0+)|\phi(0-)|^2 - u_{\widetilde{T}}(0-) u'_{\widetilde{T}}(0+)|\phi(0+)|^2}{u_{\widetilde{T}}(0-)u_{\widetilde{T}}(0+)}\\
& = \frac{u'_{\widetilde{T}}(0+) \left( \tau^2 |\phi(0-)|^2 - |\phi(0+)|^2 \right)}{u_{\widetilde{T}}(0+)}=0
\end{align*}
and this concludes the proof.
\end{proof}
\end{prop}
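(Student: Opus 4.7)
The proof naturally splits into the kernel computation and the positivity. For the first claim, the equation $L_2 u=0$ reads $-u''+\omega u - |u_{\widetilde{T}}|^{2\mu} u = 0$ away from the origin, which is precisely the stationary equation \eqref{condition1} satisfied by $u_{\widetilde{T}}$; hence $u_{\widetilde{T}}\in\ker L_2$. To conclude that no further element lies in the kernel, I would argue that on each halfline the $L^2$-solutions of this second order linear ODE form a one-dimensional space (only one Jost-type solution decays at infinity), so before matching we have a two-dimensional candidate space; the two F\"ul\"op-Tsutsui junction conditions at the origin then cut it down to one dimension, which must therefore coincide with $\mathrm{Span}\{u_{\widetilde{T}}\}$.

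For the positivity, the key observation is that $u_{\widetilde{T}}$ is nowhere zero, being a translate of the strictly positive soliton $\phi_\omega$ on each halfline. This makes available the standard ground state factorization: using $-u_{\widetilde{T}}''+\omega u_{\widetilde{T}} = |u_{\widetilde{T}}|^{2\mu}u_{\widetilde{T}}$, one checks by direct differentiation that at every $x\neq 0$
\begin{equation*}
L_2 \phi = -\frac{1}{u_{\widetilde{T}}}\frac{d}{dx}\!\left( u_{\widetilde{T}}^2\, \frac{d}{dx}\!\left( \frac{\phi}{u_{\widetilde{T}}} \right) \right).
\end{equation*}
Plugging this into $(L_2\phi,\phi)$, splitting the integral into $\int_{-\infty}^{0}$ and $\int_{0}^{+\infty}$ and integrating by parts once would produce the bulk term $\int u_{\widetilde{T}}^2\,\bigl|(\phi/u_{\widetilde{T}})'\bigr|^2\,dx\geq 0$, plus boundary contributions at $0\pm$ (those at $\pm\infty$ vanish by decay).

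The main technical step is the vanishing of these boundary terms, which take the form
\begin{equation*}
\phi'(0+)\overline{\phi(0+)} - \phi'(0-)\overline{\phi(0-)} - \frac{u_{\widetilde{T}}'(0+)}{u_{\widetilde{T}}(0+)}|\phi(0+)|^2 + \frac{u_{\widetilde{T}}'(0-)}{u_{\widetilde{T}}(0-)}|\phi(0-)|^2.
\end{equation*}
Using the F\"ul\"op-Tsutsui conditions for $\phi\in D(L_2)$, the first difference reduces to $-v|\phi(0-)|^2$; using $u_{\widetilde{T}}(0+)=\tau u_{\widetilde{T}}(0-)$ to convert $|\phi(0+)|^2=\tau^2|\phi(0-)|^2$, together with $u_{\widetilde{T}}'(0-)-\tau u_{\widetilde{T}}'(0+)=v\, u_{\widetilde{T}}(0-)$ to eliminate $u_{\widetilde{T}}'(0-)$ from the remaining combination, the $v$-contribution cancels and the whole boundary residue collapses to zero. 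This algebraic cancellation, rather than any analytic subtlety, is where the F\"ul\"op-Tsutsui structure really enters, and once it is carried out the claim $L_2\geq 0$ follows.
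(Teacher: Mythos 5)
Your proposal is correct and follows essentially the same route as the paper: the kernel claim is read off from the stationary equation \eqref{condition1}, and positivity is obtained via the ground state factorization $L_2\phi=-u_{\widetilde{T}}^{-1}\bigl(u_{\widetilde{T}}^2(\phi/u_{\widetilde{T}})'\bigr)'$, integration by parts on each halfline, and the algebraic cancellation of the boundary terms at $0\pm$ using the F\"ul\"op--Tsutsui conditions for both $\phi$ and $u_{\widetilde{T}}$. The paper's treatment of the kernel is in fact terser than yours (it simply cites \eqref{condition1}), so your dimension-counting remark is, if anything, a slight elaboration rather than a departure.
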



\begin{prop}
\label{GSS2}
Let $\om > \frac{v^2}{(\tau^2+1)^2}$, then the operator $L_1$ has a trivial kernel and a single negative eigenvalue.
\begin{proof}
From Proposition \ref{GSS1}, we know that $L_2 u_{\widetilde{T}}=0$. As a consequence, it follows that
\begin{align*}
\frac{d}{dx} \left( -u''_{\widetilde{T}}+\om u'_{\widetilde{T}} -|u_{\widetilde{T}}|^{2\mu}u'_{\widetilde{T}}\right) & =0, \quad x \neq 0\\
 -u'''_{\widetilde{T}}+\om u''_{\widetilde{T}} -(2\mu+1)|u_{\widetilde{T}}|^{2\mu}u''_{\widetilde{T}} & = 0, \quad x \neq 0.
\end{align*}
However, $u'_{\widetilde{T}}$ does not satisfy the F\"{u}l\"{o}p-Tsutsui conditions at the origin, so it is not in the kernel of $L_1$.\\
As a matter of fact, if we consider the equation:
\begin{equation} \label{bla}
    -\zeta''+\om \zeta - \frac{\om (\mu + 1)(2\mu+1)}{\cosh^2(\mu \sqrt{\om} x)}\zeta=0, \quad x \neq 0
\end{equation} its solution is given by the derivative of the soliton \eqref{soliton} that, up to a factor, corresponds to:
\begin{equation*}
\zeta(x)=\frac{\sinh(\mu \sqrt{\om}x)}{\cosh^{1+\frac{1}{\mu}}(\mu \sqrt{\om}x)}.
\end{equation*} 
Moreover, let us note that there could not exist a non square-integrable solution $\eta \notin Span(\zeta)$ to \eqref{bla} such that $\int_0 ^{+\infty} |\eta(x)|^2 dx < \infty$. Indeed, in that case, by invariance under reflection the function $\eta(-x)$ would be an other solution to \eqref{bla} such that $\int_{-\infty} ^0 |\eta(x)|^2 dx < \infty$ and there would be three linearly independent solutions to \eqref{bla}, whereas they have to be two.\\
As a consequence, the equation
\begin{equation*}
    -\zeta''+\om \zeta - \frac{\om (\mu + 1)(2\mu+1)}{\cosh^2(\mu \sqrt{\om}(x+\chi_-(x) \widetilde{x}_- +\chi_+(x)\widetilde{x}_+)}\zeta=0, \quad x \neq 0
\end{equation*}
\noindent is solved by $\zeta_\beta(x)= \chi_- \zeta(x+\widetilde{x}_-)+ \beta \chi_+ \zeta(x+\widetilde{x}_+)$, with $\beta \in \mathbb{C}$ to be found. Imposing the F\"{u}l\"{o}p-Tsutsui conditions at the origin to $\zeta_\beta$, namely
\begin{equation*}
\label{systbeta}
\Bigg \{ \begin{array}{lr}
\beta \zeta(\widetilde{x}_+)= \tau \zeta(\widetilde{x}_-),\\
\zeta'(\widetilde{x}_-) - \tau \beta \zeta'(\widetilde{x}_+)= v \zeta(\widetilde{x}_-).
\end{array}
\end{equation*}
From the first equation we obtain
\begin{equation*}
\beta= \tau \frac{\sinh(\mu \sqrt{\om}\widetilde{x}_-) \cosh^{1+\frac{1}{\mu}}(\mu \sqrt{\om}\widetilde{x}_+)}{\sinh(\mu \sqrt{\om}\widetilde{x}_+) \cosh^{1+\frac{1}{\mu}}(\mu \sqrt{\om}\widetilde{x}_-)}.
\end{equation*}
Hence, from the second equation it follows that 
\begin{equation*}
\frac{\mu - \sinh^2(\mu \sqrt{\om}\widetilde{x}_-)}{\sinh(\mu \sqrt{\om}\widetilde{x}_-)\cosh(\mu \sqrt{\om}\widetilde{x}_-)} - \tau^2 \frac{\mu - \sinh^2(\mu \sqrt{\om}\widetilde{x}_+)}{\sinh(\mu \sqrt{\om}\widetilde{x}_+)\cosh(\mu \sqrt{\om}\widetilde{x}_+)}= \frac{v}{\sqrt{\om}}.
\end{equation*}
Recalling that $\cosh^{2}(x) - \sinh^2(x)=1$, we obtain 
\begin{align*}
& \frac{\mu-(\mu+1) \tanh^2(\mu \sqrt{\om}\widetilde{x}_-)}{\tanh(\mu \sqrt{\om}\widetilde{x}_-)} - \tau^2 \frac{\mu-(\mu+1) \tanh^2(\mu \sqrt{\om}\widetilde{x}_+)}{\tanh(\mu \sqrt{\om}\widetilde{x}_+)}= \frac{v}{\sqrt{\om}}.
\end{align*}
Recalling that $\widetilde{T}_{\pm}=\tanh({\mu \sqrt{\om}\widetilde{x}_{\pm}})$ and that the couple $(\widetilde{T}_-,\widetilde{T}_+)$ solves \eqref{condition3}, thanks to the first equation in the system, it follows that
\begin{equation*}
\frac{1-\widetilde{T}^2_-}{\widetilde{T}_-} - \tau^2 \frac{1-\widetilde{T}^2_+}{\widetilde{T}_+}=0.
\end{equation*}
Finally, using the second equation in (\ref{condition3}), one obtains that $$\widetilde{T}_-^2=\frac{\tau^{2\mu}-1}{\tau^{2\mu}(1-\tau^{2\mu+4})},$$ but this is impossible because the r.h.s. is negative. Hence, we conclude that the kernel of $L_1$ is trivial.\\
To prove the existence of a single negative eigenvalue for $L_1$, we first note that the number of negative eigenvalues is finite thanks to the the fast decay in $x$ and the boundedness of the last term in the l.h.s of \eqref{bla}. By Lemma \ref{below}, Proposition \ref{GSS1} and by the fact that the Nehari manifold has codimension one, we conclude that $L_1$ has at most one negative eigenvalue. On the other hand it holds
\begin{align*}
(L_1 u_{\widetilde{T}},u_{\widetilde{T}}) & = (L_2 u_{\widetilde{T}},u_{\widetilde{T}}) - 2\mu||u_{\widetilde{T}}||^{2\mu+2}_{2\mu+2} \\
                                                 & = - 2\mu||u_{\widetilde{T}}||^{2\mu+2}_{2\mu+2} <0.
\end{align*}
As a consequence $L_1$ has one negative eigenvalue.
\end{proof}
\end{prop}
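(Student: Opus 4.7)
The plan is to handle the kernel and the negative eigenvalue separately, exploiting that $u_{\widetilde T}$ is a minimizer on the Nehari manifold.

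For the kernel, I would start from the observation that $L_2 u_{\widetilde T}=0$ obtained in Proposition \ref{GSS1} encodes the stationary state equation; differentiating that equation on each half-line produces precisely the equation $L_1 \zeta = 0$ pointwise away from the origin. On each half-line the only $L^2$ solution (up to scalar) is therefore a translate of the derivative of the soliton, so any element of $\ker L_1$ must be of the form $\zeta_\beta(x) = \chi_-(x)\zeta(x+\widetilde x_-) + \beta\chi_+(x)\zeta(x+\widetilde x_+)$ with $\zeta(x) = \sinh(\mu\sqrt\omega x)/\cosh^{1+1/\mu}(\mu\sqrt\omega x)$ and some $\beta\in\mathbb{C}$. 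Imposing the continuity/jump conditions $\zeta_\beta(0+) = \tau\zeta_\beta(0-)$ and $\zeta_\beta'(0-)-\tau\zeta_\beta'(0+) = v\zeta_\beta(0-)$ determines $\beta$ from the first equation and, after using the first relation of \eqref{condition3} to eliminate $v/\sqrt\omega$, reduces the second one to an algebraic identity in $\widetilde T_\pm$. Using then the second equation of \eqref{condition3}, I would reduce this to an expression of the form $\widetilde T_-^2 = (\tau^{2\mu}-1)/(\tau^{2\mu}(1-\tau^{2\mu+4}))$, whose right-hand side has a definite sign incompatible with being the square of a real number. This contradiction gives $\ker L_1=\{0\}$.

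For the sign count I would argue in three steps. First, finiteness of the negative spectrum: $L_1$ is a compactly supported perturbation (in the relative sense) of $-\frac{d^2}{dx^2}+\om$ on $\Htau$, since $|u_{\widetilde T}|^{2\mu}$ is bounded and decays exponentially, so the essential spectrum is $[\om,\infty)$ and only finitely many negative eigenvalues can appear below it. Second, existence of at least one negative eigenvalue follows from the explicit computation
\begin{equation*}
(L_1 u_{\widetilde T}, u_{\widetilde T}) = (L_2 u_{\widetilde T}, u_{\widetilde T}) - 2\mu\|u_{\widetilde T}\|_{2\mu+2}^{2\mu+2} = -2\mu\|u_{\widetilde T}\|_{2\mu+2}^{2\mu+2} < 0,
\end{equation*}
using $L_2 u_{\widetilde T}=0$ from Proposition \ref{GSS1}.

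The main obstacle is the upper bound on the number of negative eigenvalues. Here I would exploit that $u_{\widetilde T}$ is a minimizer of the reduced action $\widetilde S$ on the Nehari manifold $\{I_\om=0\}$, which has codimension one in $\Htau$. The real second variation of $S_\om$ at $u_{\widetilde T}$ in the direction of the real part is exactly $(L_1 a,a)$; the Nehari constraint imposes a single linear condition $\langle I'_\om(u_{\widetilde T}), a\rangle=0$, and on this subspace $(L_1 a,a)\ge 0$ by minimality. By the standard min--max/interlacing principle, imposing a single linear condition can reduce the number of negative eigenvalues by at most one, so $L_1$ has at most one negative eigenvalue. Combined with the previous step this gives exactly one, completing the proof.
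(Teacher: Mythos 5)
Your proposal follows essentially the same route as the paper: the kernel is excluded by reducing any candidate to $\zeta_\beta$, imposing the F\"ul\"op--Tsutsui conditions, and reaching the same sign contradiction for $\widetilde{T}_-^2$, while the eigenvalue count uses the identical three steps (finiteness from decay of the potential, at least one from $(L_1 u_{\widetilde{T}},u_{\widetilde{T}})<0$, at most one from minimality on the codimension-one Nehari manifold together with min--max). Your write-up of the ``at most one'' step is in fact more explicit than the paper's, which only cites the codimension-one fact; the content is the same.
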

\noindent In the remaining part of the section, we focus on the requirements regarding the $L^2$-norm of the ground state $u_{\widetilde{T}}$, in order to get orbital stability.
\begin{lem}
\label{decreasing}
\begin{equation*}
\varphi(\om)= \int_{\widetilde{T}_-(\om)}^{\widetilde{T}_+(\om)} (1-t^2)^{\frac{1}{\mu}-1}dt
\end{equation*}
 is a decreasing function of $\om$.
\begin{proof}
From the explicit form of $\widetilde{T}_-(\om)$ and $\widetilde{T}_+(\om)$ we obtain
\begin{align*}
\widetilde{T'}_-(\om) & = -\frac{v}{2(\tau^{2\mu+4}-1)}\left( \frac{1}{\om^{\frac{3}{2}}} - \frac{v \tau^{2\mu+2}}{\om^2 \sqrt{A(\om)}} \right),\\
\widetilde{T'}_+(\om) & = -\frac{v}{2(\tau^{2\mu+4}-1)}\left( \frac{\tau^{2\mu+2}}{\om^{\frac{3}{2}}} - \frac{v \tau^{2\mu}}{\om^2 \sqrt{A(\om)}} \right),
\end{align*}
where $A(\om)=\frac{v^2}{\omega}\tau^{2\mu}+(\tau^{2\mu+4}-1)(\tau^{2\mu}-1)$.\\
 By (\ref{condition3}) it follows that $1- \widetilde{T}^2_- (\om)=\frac{1- \widetilde{T}^2_+(\om)}{\tau^{2\mu}}$, hence
\begin{align*}
\varphi'(\om) & =\left( 1- \widetilde{T_+}^2(\om) \right)^{\frac{1}{\mu}-1} \widetilde{T'}_+(\om) - \left( 1- \widetilde{T_-}^2(\om) \right)^{\frac{1}{\mu}-1} \widetilde{T'}_-(\om)\\
                     & = \left( 1- \widetilde{T_+}^2(\om) \right)^{\frac{1}{\mu}-1} \left( \widetilde{T'}_+ - \frac{\widetilde{T'}_-}{\tau^{2-2\mu}}\right).
\end{align*}
Recalling that $\widetilde{T}_\pm(\om) \in (-1,1)$, the first term in the r.h.s is positive. On the other hand, by direct computation one obtains
\begin{equation*}
\widetilde{T'}_+ - \frac{\widetilde{T'}_-}{\tau^{2-2\mu}} = - \frac{v}{2(\tau^{2\mu+4}-1)} \left( \frac{\tau^{2\mu+2}}{\om^{\frac{3}{2}}} (\tau^4-1) + \frac{v\tau^{2\mu}}{\om^2 \sqrt{A(\om)}}(\tau^{2\mu}-1) \right) <0.
\end{equation*} 
\end{proof}
\end{lem}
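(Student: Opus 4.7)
The plan is to differentiate $\varphi$ directly and show $\varphi'(\om)<0$. By the fundamental theorem of calculus applied to each endpoint,
\begin{equation*}
\varphi'(\om) = \bigl(1-\widetilde{T}_+^2(\om)\bigr)^{\frac{1}{\mu}-1}\widetilde{T}_+'(\om) - \bigl(1-\widetilde{T}_-^2(\om)\bigr)^{\frac{1}{\mu}-1}\widetilde{T}_-'(\om).
\end{equation*}
The crucial algebraic simplification comes from the second equation of the system \eqref{condition3}, which after clearing denominators is equivalent to
\begin{equation*}
1-\widetilde{T}_-^2(\om)=\tau^{-2\mu}\bigl(1-\widetilde{T}_+^2(\om)\bigr).
\end{equation*}
Raising to the power $\frac{1}{\mu}-1$ lets me factor the common positive quantity $(1-\widetilde{T}_+^2)^{\frac{1}{\mu}-1}$ out of $\varphi'(\om)$, reducing the problem to determining the sign of $\widetilde{T}_+'(\om)-\tau^{2\mu-2}\widetilde{T}_-'(\om)$.

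Next, I would differentiate the closed-form expressions for $\widetilde{T}_\pm(\om)$ obtained after Theorem \ref{stationary}. Writing $A(\om)=\frac{v^2}{\om}\tau^{2\mu}+(\tau^{2\mu+4}-1)(\tau^{2\mu}-1)$ so that $A'(\om)=-v^2\tau^{2\mu}/\om^2$, a short computation (just $\frac{d}{d\om}\om^{-1/2}=-\frac{1}{2}\om^{-3/2}$ and the chain rule on $\sqrt{A}$) gives the formulas for $\widetilde{T}_\pm'(\om)$ stated in the lemma. Substituting and collecting terms yields an expression of the form
\begin{equation*}
\widetilde{T}_+'(\om)-\tau^{2\mu-2}\widetilde{T}_-'(\om)=-\frac{v}{2(\tau^{2\mu+4}-1)}\Bigl(\tfrac{C_1(\tau,\mu)}{\om^{3/2}}+\tfrac{vC_2(\tau,\mu)}{\om^{2}\sqrt{A(\om)}}\Bigr),
\end{equation*}
where $C_1$ is proportional to $\tau^4-1$ and $C_2$ is proportional to $\tau^{2\mu}-1$.

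The remaining step is a sign check, and this is the only place that requires a small amount of care. I split into cases $\tau>1$ and $0<\tau<1$. For $\tau>1$ all of $\tau^4-1$, $\tau^{2\mu}-1$ and $\tau^{2\mu+4}-1$ are positive, so the bracket is positive and the leading sign is negative; for $\tau<1$ all three quantities flip sign, so the bracket turns negative while the prefactor turns positive, yielding again a negative product. In both cases $\widetilde{T}_+'(\om)-\tau^{2\mu-2}\widetilde{T}_-'(\om)<0$, and since $(1-\widetilde{T}_+^2)^{\frac{1}{\mu}-1}>0$ (recall $|\widetilde{T}_+|<1$), we conclude $\varphi'(\om)<0$, so $\varphi$ is strictly decreasing. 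The main obstacle, if any, is book-keeping the exponents of $\tau$ when combining $\tau^{2\mu-2}\widetilde{T}_-'$ with $\widetilde{T}_+'$ so that the common factor $(\tau^{2\mu+4}-1)^{-1}$ is isolated cleanly; once that is done, the sign argument is immediate.
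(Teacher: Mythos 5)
Your proposal is correct and follows essentially the same route as the paper: differentiate $\varphi$ via the fundamental theorem of calculus, use the identity $1-\widetilde{T}_-^2=\tau^{-2\mu}(1-\widetilde{T}_+^2)$ coming from \eqref{condition3} to factor out $(1-\widetilde{T}_+^2)^{\frac{1}{\mu}-1}$, and then check the sign of $\widetilde{T}_+'-\tau^{2\mu-2}\widetilde{T}_-'$ from the explicit derivatives. Your explicit case split on $\tau>1$ versus $\tau<1$ is slightly more careful than the paper, which simply asserts the resulting expression is negative, but the content is identical.
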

As a consequence, it follows that:
\begin{prop}
\label{GSS3}
Let $\om > \frac{v^2}{(\tau ^2 + 1)^2}$ and $\mu \in (0, 2]$. Then $M(\om)=||u_{\widetilde{T}}||^2_2$ is an increasing function of $\om$. 
\begin{proof}
Thanks to Proposition \ref{ident} and Lemma \ref{decreasing}, we can observe that $$M(\om)= \frac{(\mu+1)^{\frac{1}{\mu}}}{\mu} \om^{\frac{1}{\mu}-\frac{1}{2}} \left( \int_{-1}^{1} (1-t^2)^{\frac{1}{\mu}-1}dt - \varphi(\om) \right).$$ \\It follows that 
\begin{equation*}
M'(\om)= \xi'(\om) \left( \int_{-1}^{1} (1-t^2)^{\frac{1}{\mu}-1}dt - \varphi(\om) \right) - \xi(\om) \varphi'(\om),
\end{equation*}
where 
\begin{align*}
\xi(\om) & =\frac{(\mu+1)^{\frac{1}{\mu}}}{\mu} \om^{\frac{1}{\mu}-\frac{1}{2}},\\
\xi'(\om) & = \frac{(\mu+1)^{\frac{1}{\mu}}}{\mu}\frac{2-\mu}{2\mu}\om^{\frac{1}{\mu}-\frac{3}{2}}.
\end{align*}
Since $ \int_{-1}^{1} (1-t^2)^{\frac{1}{\mu}-1}dt - \varphi(\om)>0$, by Lemma \ref{decreasing} we just need to study the sign of $\xi(\om)$ and $\xi'(\om)$.\\
For $\mu \in (0,2)$ we have that $\xi(\om)>0$ and in particular $\xi(\om)$ is a positive constant for $\mu=2$. On the other hand, for $\mu \in (0,2]$, it follows that $\xi'(\om) \geq 0$.\\
\end{proof}
\end{prop}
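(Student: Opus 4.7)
The plan is to start from the explicit formula in Proposition \ref{ident} and rewrite $M(\om)=\|u_{\widetilde T}\|_2^2$ as a product of an $\om$-power and a bracketed difference,
\[
M(\om)=\xi(\om)\Bigl(C_\mu-\varphi(\om)\Bigr), \qquad \xi(\om):=\frac{(\mu+1)^{1/\mu}}{\mu}\om^{1/\mu-1/2}, \quad C_\mu:=\int_{-1}^{1}(1-t^2)^{1/\mu-1}\dt,
\]
with $\varphi$ the function appearing in Lemma \ref{decreasing}. Monotonicity of $M$ will then be read off by signing each factor in the product-rule expression for $M'$.

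Differentiating gives $M'(\om)=\xi'(\om)\bigl(C_\mu-\varphi(\om)\bigr)-\xi(\om)\varphi'(\om)$, and I would dispatch the four signs in turn. Clearly $\xi(\om)>0$, and since $\widetilde T_\pm(\om)\in(-1,1)$ the integration range of $\varphi(\om)$ is a proper subinterval of $(-1,1)$ with strictly positive integrand, so $C_\mu-\varphi(\om)>0$. Lemma \ref{decreasing} supplies $\varphi'(\om)<0$, whence $-\xi(\om)\varphi'(\om)>0$. It therefore only remains to sign $\xi'(\om)$.

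A direct computation yields $\xi'(\om)=\frac{(\mu+1)^{1/\mu}}{\mu}\cdot\frac{2-\mu}{2\mu}\,\om^{1/\mu-3/2}$, which is precisely where the hypothesis $\mu\in(0,2]$ enters: $\xi'(\om)\ge 0$ if and only if $\mu\le 2$, with equality exactly at $\mu=2$. For $\mu<2$ both summands in $M'(\om)$ are nonnegative and the second is strictly positive; for $\mu=2$ the first vanishes but the second still forces strict monotonicity. I expect the real obstacle to sit upstream, inside Lemma \ref{decreasing}, where one must differentiate the explicit expressions of $\widetilde T_\pm(\om)$ from Theorem \ref{stationary} and exploit the relation $1-\widetilde T_-^2=(1-\widetilde T_+^2)/\tau^{2\mu}$ coming from \eqref{condition3}; once that sign is in hand the present proposition reduces to sign bookkeeping, the only subtle point being the endpoint $\mu=2$ where monotonicity is rescued by the surviving term $-\xi\varphi'$.
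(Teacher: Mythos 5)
Your proposal is correct and follows essentially the same route as the paper: the same factorization $M(\om)=\xi(\om)\bigl(C_\mu-\varphi(\om)\bigr)$, the same product-rule differentiation, and the same sign bookkeeping using $\xi>0$, $C_\mu-\varphi(\om)>0$, $\xi'\ge 0$ for $\mu\le 2$, and $\varphi'<0$ from Lemma \ref{decreasing}. Your explicit remark that at $\mu=2$ strict monotonicity is carried by the term $-\xi(\om)\varphi'(\om)$ is a point the paper leaves implicit, but the argument is otherwise identical.
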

\noindent We conclude with the main theorem of the section that collects all the previous results.
\begin{thm}
Let $\om > \frac{v^2}{(\tau ^2 + 1)^2}$, then for $\mu \in (0,2]$ the ground state $u_{\widetilde{T}}$ is orbitally stable. 
\begin{proof}
The proof follows from Proposition \ref{GSS1}, \ref{GSS2} and \ref{GSS3}. 
\end{proof}
\end{thm}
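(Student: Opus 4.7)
The plan is to apply the abstract Grillakis-Shatah-Strauss framework \cite{gss1,gss2} directly, since the three preceding propositions have been designed precisely to verify its hypotheses. First I would recall the general setup: for a Hamiltonian PDE with a $U(1)$ symmetry generated by mass conservation, the stability of a bound state $u_{\widetilde{T}}$ associated with frequency $\omega$ follows once one checks (i) that the Cauchy problem is locally well-posed in the energy space, (ii) the spectral conditions on the Hessian $S''_\omega(u_{\widetilde{T}})$ acting on $\Htau$, namely that the real part $L_1$ has exactly one simple negative eigenvalue and trivial kernel, while the imaginary part $L_2$ is non-negative with kernel spanned by the bound state itself, and (iii) the Vakhitov-Kolokolov slope condition $\tfrac{d}{d\omega}\|u_{\widetilde{T}}\|_2^2>0$.

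The verification step is then immediate. Local (and global, in the energy-subcritical regime) well-posedness in $\Htau$ is quoted from \cite{adno}. The spectral hypotheses are exactly the content of Proposition \ref{GSS1} (for $L_2$) and Proposition \ref{GSS2} (for $L_1$); note in particular that Proposition \ref{GSS2} also implicitly controls the codimension of the admissible subspace on which $L_1$ must be non-negative, because the Nehari manifold has codimension one and $(L_1 u_{\widetilde{T}},u_{\widetilde{T}})<0$ has been used to locate the unique negative direction. Finally, the slope condition is precisely Proposition \ref{GSS3}, which restricts the stability range to $\mu\in(0,2]$.

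The only thing to be careful about is that the abstract GSS theorems are usually stated for self-adjoint operators with standard domains, whereas here $H_{\tau,v}$ has a non-trivial domain encoded by the F\"ul\"op-Tsutsui conditions, and the second variation $S''_\omega(u_{\widetilde{T}})$ lives on $\Htau$ rather than $H^1(\rr)$. I would address this by pointing out that the symplectic structure is unchanged, that the quadratic form $(L_1 a,a)+(L_2 b,b)$ is the genuine second variation derived earlier in this section on the natural energy space $\Htau$, and that the functional setting fits into the abstract framework once one uses $\Htau$ as the ambient Hilbert space and the $L^2$ duality pairing. The main potential obstacle is precisely this adaptation to the discontinuous setting, but it is a routine check because Propositions \ref{GSS1}-\ref{GSS3} were stated with the discontinuity already incorporated through the boundary conditions at the origin.

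With these ingredients in hand, the conclusion is a direct citation: for every $\omega>\tfrac{v^2}{(\tau^2+1)^2}$ and every $\mu\in(0,2]$, the GSS theorem yields that $u_{\widetilde{T}}$ is orbitally stable in the sense of the definition given at the beginning of the section, completing the proof.
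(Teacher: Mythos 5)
Your proposal is correct and follows essentially the same route as the paper, which simply cites Propositions \ref{GSS1}, \ref{GSS2} and \ref{GSS3} together with the Grillakis--Shatah--Strauss theory and the well-posedness result of \cite{adno}. Your additional remarks on adapting the abstract framework to the space $\Htau$ are a useful elaboration but do not constitute a different argument.
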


\begin{rem}
Relying on numerical results (see Figure \ref{supercr}), we conjecture that for $\mu>2$, the ground state $u_{\widetilde{T}}$ is stable up to a critical value of $\om$ and then, it becomes unstable.  
\begin{figure}[H]
\centering
\includegraphics[width=0.5\columnwidth]{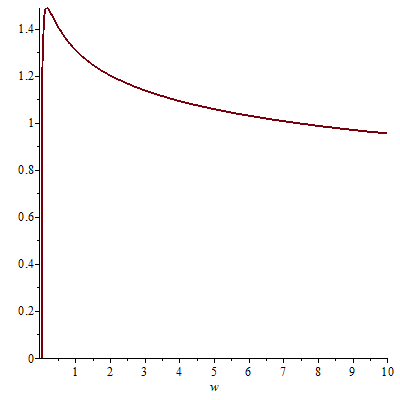}
\captionof{figure}{Graph of the mass of $u_{\widetilde{T}}$ depending on $\om$, for  $\mu=3$ and $v=1$, $\tau=2$.}
\label{supercr}
\end{figure}
\end{rem}

\end{document}